\newcommand{\N}{\mathbb{N}}
\newcommand{\Q}{\mathbb{Q}}
\newcommand{\C}{\mathbb{C}}
\newcommand{\Z}{\mathbb{Z}}
\newtheorem{definition}{Definition}
\newtheorem{theorem}{Theorem}
\newtheorem{example}{Example}
\newtheorem{lemma}{Lemma}
\title{The Monstrous Moonshine Picture}
\author{Lieven Le Bruyn} 
\address{Department of Mathematics, University of Antwerp \\ 
 Middelheimlaan 1, B-2020 Antwerp (Belgium) \\ {\tt lieven.lebruyn@uantwerpen.be}}
\begin{document}
\sloppy

\maketitle

\begin{abstract}
We describe the finite subgraph $\mathfrak{M}$ of Conway's Big Picture \cite{Conway} required to describe all $171$ genus zero groups appearing in monstrous moonshine. We determine the local structure of $\mathfrak{M}$ and give a purely group-theoretic description of this picture, based on powers of the  conjugacy classes $24J$ and $8C$. We expect similar results to hold for umbral moonshine groups and give the details for the largest Mathieu group $M_{24}$.
\end{abstract}

\section{Introduction}

Conway's {\em big picture}, see \cite{Conway} or section~2, is the Hasse diagram of a wonderful poset structure on the cosets $PSL_2(\Z) \backslash PGL_2^+(\Q)$, classifying projective classes of commensurable integral lattices of rank two. It was introduced to understand discrete groups like the congruence subgroups $\Gamma_0(N)$ and their normalizers in $PSL_2(\mathbb{R})$, in particular the genus zero groups appearing in {\em monstrous moonshine} \cite{ConwayNorton}. Later, it turned out to be important in Connes' approach to the Riemann hypothesis, see \cite{BostConnes} or \cite{Connes}, as its vertices form a basis for the regular representation of the Bost-Connes algebra, see also \cite{Plazas} or \cite{LeBruyn}. This algebra encodes the action of the absolute Galois group on the roots of unity.

After choice of an ordered basis of $\Q^2$ one can identify $PSL_2(\Z) \backslash PGL_2^+(\Q)$ (non-canonically) with $\Q_+ \times \Q/\Z$ and one can define a {\em hyper-distance} function on the cosets taking values in $\N_+$. The edges in the big picture are then drawn  between any two cosets at prime hyper-distance from each other, see section~2.  It then turns out that the big picture $\mathfrak{P}$ is the rooted product $\ast_p T_p$ over all prime numbers $p$ and where $T_p$ is the free $p+1$-valent tree. One purpose of this paper is to clarify what we mean by this product as there is an amount of non-commutativity involved, reminiscent of the meta-commutation relations in the Hurwitz quaternions, see \cite{ConwaySmith} and lemma~\ref{meta}. Another is to make the connection between certain cosets, the so-called {\em number-like} classes $M \frac{g}{h}$ corresponding to couples $(M,\frac{g}{h}) \in \Q_+ \times \Q/\Z$ with $M \in \N_+$, and sets of roots of unity. Here, we will view the class $M \frac{g}{h}$ as a primitive $h$-th root of unity centered at the number-class $h.M$.

To illustrate these two ideas we will describe the {\em monstrous moonshine picture} $\mathfrak{M}$ which is the finite full subgraph of the big picture on the number-like classes needed to describe all $171$ moonshine groups of \cite{ConwayNorton}. As a warm-up, take John McKay's $E_8$-observation on the monster conjugacy classes:
\[
\xymatrix@=.4cm{
& & & & & 3C \ar@{-}[d] & & \\
1A \ar@{-}[r] & 2A \ar@{-}[r] & 3A \ar@{-}[r] & 4A \ar@{-}[r] & 5A \ar@{-}[r] & 6A \ar@{-}[r] & 4B \ar@{-}[r] & 2B}
\]
In \cite{Duncan} John Duncan constructed an extended $E_8$-diagram on the nine moonshine groups corresponding to these conjugacy classes
\[
\xymatrix@=.4cm{
& & & & & 3|3 \ar@{-}[d] & & \\
1 \ar@{-}[r] & 2+ \ar@{-}[r] & 3+ \ar@{-}[r] & 4+ \ar@{-}[r] & 5+ \ar@{-}[r] & 6+ \ar@{-}[r] & 4|2+ \ar@{-}[r] & 2}
\]
To understand these nine moonshine groups we just need a tiny fraction of the big picture. More precisely , the subgraph on these twelve number-like classes, where black (resp. red and green) edges correspond to the primes $2$ (resp. $3$ and $5$).
\[
\xymatrix@=.6cm{
& 1 \frac{2}{3} \ar@[red]@{-}[d] & & & \\
1 \frac{1}{3} \ar@[red]@{-}[r] & 3 \ar@[red]@{-}[r] \ar@[red]@{-}[d] \ar@{-}[rd] & 9 & & \\
& 1 \ar@[green]@{-}[ld] \ar@{-}[rd] & 6 \ar@[red]@{-}[d] & & 8 \ar@{-}[ld] \\
5 & & 2 \ar@{-}[r] \ar@{-}[ld] & 4 \ar@{-}[rd] & \\
& 1 \frac{1}{2} & & & 2 \frac{1}{2}}
\]
Each number-like class $M \frac{g}{h}$ is the projective class of the lattice $\Z(M \vec{e}_1 + \frac{g}{h} \vec{e}_2) \oplus \Z \vec{e}_2$. The stabilizer subgroup of a number-class $M$ is then the conjugate of $\Gamma = PSL_2(\Z)$ by $\alpha_M=${\tiny $\begin{bmatrix} M & 0 \\ 0 & 1 \end{bmatrix}$}. Conway's insight was to view $\Gamma_0(N)$ as the joint-stabilizer of the number-classes $1$ and $N$ as it is equal to $\Gamma \cap \alpha_N \Gamma \alpha_N^{-1}$. Therefore, $\Gamma_0(N)$ also stabilizes the {\em $(N|1)$-thread}  point-wise, that is, all number classes $M$ such that  $1 | M | N$. More generally, the group stabilizing the $(n|h)$-thread  (with $h | n$) pointwise is the conjugate $\alpha_h \Gamma_0(\tfrac{n}{h}) \alpha_h^{-1}$. The group $N+$ is obtained by adding to $\Gamma_0(N)$ the Atkin-Lehner involutions, which correspond to the symmetries of the$(N|1)$-thread, see subsection~\ref{snakes}. For example, the $(6|1)$-thread has four symmetries
\[
\xymatrix@=.6cm{3 \ar@{-}[r] \ar@[red]@{-}[d] & 6 \ar@[red]@{-}[d] \\
1 \ar@{-}[r] & 2} \]
generated by the horizontal and vertical flips, which correspond to multiplying the classes of lattices with the matrices  {\tiny $\frac{1}{\sqrt{2}} \begin{bmatrix} 2 & -1 \\ 6 & -2 \end{bmatrix}$} and {\tiny $\tfrac{1}{\sqrt{3}} \begin{bmatrix} 3 & 1 \\ 6 & 3 \end{bmatrix}$}, so with $6+$ we denote the subgroup of $PSL_2(\mathbb{R})$ generated by $\Gamma_0(6)$ and these two matrices. That is, the threads and their symmetries describe the groups $1,2,2+,3+,4+,5+$ and $6+$.

To describe groups like $n|h$ and $n|h+$ for $h$ dividing $n$ we consider the {\em $(n|h)$-serpent} which consists of all classes at hyper-distance a divisor of $h$ from the $(n|h)$-thread. These classes are fixed pointwise by $\Gamma_0(N)$ where $N=h.n$. The matrices $x=${\tiny $\begin{bmatrix} 1 & \frac{1}{h} \\ 0 & 1 \end{bmatrix}$} and $y=$ {\tiny $\begin{bmatrix} 1 & 0 \\ n & 1 \end{bmatrix}$} normalize $\Gamma_0(N)$ and we denote with $\Gamma_0(n|h)$ the group generated by $\Gamma_0(N)$ and $x$ and $y$. The moonshine group $n|h$ is then a specific subgroup of index $h$ in $\Gamma_0(n|h)$, that is, we have the situation
\[
\xymatrix{\Gamma_0(N) \ar[r] \ar@{.}@/_4ex/[rr]_{G} & n|h \ar[r] \ar@{.}@/^4ex/[r]^{h} & \Gamma_0(n|h)} \]
To describe the finite group $G$ and its index $h$ subgroup it is best to view the action of $x$ and $y$ on the $(n|h)$-serpent as power maps on different sets of roots of unity. For example, the $(3|3)$-serpent consists of the four classes at hyper-distance $3$ from $3$ which are best viewed as the vertices of a tetrahedron with center the class $3$.
\[
\xymatrix@=.3cm{& & 1 \frac{2}{3} \ar@{.}[llddd] \ar@{.}[rdd] \ar@{.}[rdddd] \ar@[red]@{-}[dd] & \\
& & & \\
& & 3 \ar@[red]@{-}[rdd] & 9 \ar@{.}[llld] \ar@{.}[dd] \ar@[red]@{-}[l] \\
1 \frac{1}{3} \ar@[red]@{-}[rru] \ar@{.}[rrrd] & &  & \\
& & & 1} \]
The sets $\{ 1,1\frac{1}{3},1\frac{2}{3} \}$ and $\{ 9,1\frac{1}{3},1\frac{2}{3} \}$ give us two copies of $3$-rd roots of unity, centered at $3$, see subsection~\ref{roots}. The element $x$ acts as a power map on the first set fixing $9$ (so corresponds to a rotation with pole $9$ of the tetrahedron) and $y$ fixes $1$ and acts as a rotation on the second set. Thus, the finite group $G$ in this case is the rotation group of the tetrahedron, $A_4$, which has a unique subgroup of index $3$, giving us the moonshine group $3|3$, see example~\ref{tetra}. A similar story holds for the $(4|2)$-serpent
\[
\xymatrix@=.3cm{ 1 \frac{1}{2} \ar@{-}[rd] \ar@{.}@/_3ex/[dd]_x \ar@{.}@/^3ex/[rrr]^w & & & 2 \frac{1}{2} \ar@{-}[ld] \ar@{.}@/^3ex/[dd]^y \\
& 2 \ar@{-}[r] \ar@{-}[ld] & 4 \ar@{-}[rd] & \\
1  \ar@{.}@/_3ex/[rrr]_w & & & 8} \]
Here, $x$ interchanges the two square roots of one centered at $2$ and fixes those centered at $4$ and $y$ fixes the roots at $2$ and interchanges the roots at $4$, whence $G = C_2 \times C_2$ and in this case the index $2$ subgroup is generated by $x.y$ which gives us a description of $4|2$. To describe $4|2+$ we have to add the Atkin-Lehner involution $w$ determined by the symmetry of the $(4|2)$-thread with action as indicated on the other classes in the $(4|2)$-serpent.

We see that the subgraph of the big picture on the twelve number-classes given above contains all information needed to describe the nine moonshine groups of McKay's $E_8$-observation. The main purpose of this paper is to describe the {\em monstrous moonshine picture} $\mathfrak{M}$ which is the minimal subgraph of the big picture needed to describe {\em all} 171 moonshine groups.

It will turn out, see section~3, that $\mathfrak{M}$ is the subgraph on exactly $207$ number-like classes, among which there are $97$ number classes. For each of the number classes $C$ we determine the sets of primitive roots $M \frac{g}{h}$ centered at $C$, that is, $C=h.M$. This will allow us to describe the local structure of $\mathfrak{M}$ at $C$. For example, if $C$ is the center of $2$-nd, $3$-rd and $6$-th roots of unity, then $C=6M$ and the number-like classes in $\mathfrak{M}$ at hyper-distance dividing $6$ from $C$ are depicted below
\[
\xymatrix@=.3cm{& M \frac{1}{6} \ar@{-}[rd] & M \frac{2}{3} \ar@{-}[d] & 4M \frac{2}{3} \ar@{-}[ld] & \\
4M \ar@{-}[rd] & 3M \frac{1}{2} \ar@{-}[rd] & 2M \frac{1}{3} \ar@[red]@{-}[d] & 12 M \ar@{-}[ld] & M \frac{1}{3} \ar@{-}[ld] \\
M \frac{1}{2} \ar@{-}[r] & 2M \ar@[red]@{-}[r] \ar@{-}[ld] & \color{blue}{6M} \ar@{-}[ld] \ar@[red]@{-}[d] \ar@[red]@{-}[r] & 2M \frac{2}{3} \ar@{-}[r] \ar@{-}[rd] & M \frac{5}{6} \\
M & 3M & 18 M \ar@{-}[ld] \ar@{-}[d] \ar@{-}[rd] & & 4M \frac{1}{3} \\
& 9M & 9M \frac{1}{2} & 36M & }
\]
Here we use the meta-commutation property, see lemma~\ref{meta2}, to start the path to a class at hyper-distance $6$ from $6M$ with an edge of length $3$. The meta-commutation relations then gives a procedure to add he remaining edges in $\mathfrak{M}$. In this case we should add the edges

\[
\xymatrix@=.3cm{& M \frac{1}{6} \ar@{-}[rd] & M \frac{2}{3} \ar@{-}[d] & 4M \frac{2}{3} \ar@{-}[ld] & \\
4M \ar@{-}[rd] & 3M \frac{1}{2} \ar@[red]@{-}[u] \ar@[red]@{-}[dl] \ar@[red]@{-}[rrrd] \ar@[red]@{-}[dddr] \ar@{-}[rd] & 2M \frac{1}{3} \ar@[red]@{-}[d] & 12 M \ar@{-}[ld] \ar@[red]@{-}[u] \ar@[red]@{-}[rdd] \ar@[red]@{-}@/_4ex/[ddd] \ar@[red]@{-}@/_3ex/[lll] & M \frac{1}{3} \ar@{-}[ld] \\
M \frac{1}{2} \ar@{-}[r] & 2M \ar@[red]@{-}[r] \ar@{-}[ld] & \color{blue}{6M} \ar@{-}[ld] \ar@[red]@{-}[d] \ar@[red]@{-}[r] & 2M \frac{2}{3} \ar@{-}[r] \ar@{-}[rd] & M \frac{5}{6} \\
M & 3M \ar@[red]@{-}[l] \ar@[red]@{-}[d] \ar@[red]@{-}[ruuu] \ar@[red]@{-}[rrruu] & 18 M \ar@{-}[ld] \ar@{-}[d] \ar@{-}[rd] & & 4M \frac{1}{3} \\
& 9M & 9M \frac{1}{2} & 36M & }
\]
The monstrous moonshine picture $\mathfrak{M}$ is the union of the $(n|k)$-serpents if $n|k+e,f,\hdots$ is a moonshine group. Conversely, one might ask whether it is possible to construct $\mathfrak{M}$ using only group-theoretic information of $\mathbf{M}$.

We give such a procedure starting from the conjugacy classes $24J$, responsible for the largest serpent in $\mathfrak{M}$ (see subsection~\ref{24J}), and $8C$, corresponding to the $(8|4)$-serpent of example~\ref{8C}. The powers of these classes give the conjugacy classes
\[
\xymatrix@=.3cm{& 24J \ar@{-}[ld] \ar@{-}[rd]& \\
12J \ar@{-}[d] \ar@{-}[rrd] & & 8F \ar@{-}[d] \\
6F \ar@{-}[rrd] \ar@{-}[d] & & 4D \ar@{-}[d] \\
3C \ar@{-}[rd] & & 2B \ar@{-}[ld] \\
& 1A &} \qquad \xymatrix@=.3cm{ \\ 8C \ar@{-}[d] \\ 4B \ar@{-}[d] \\ 2A \ar@{-}[d] \\ 1A} 
\]
The procedure to construct $\mathfrak{M}$ goes as follows: let $X$ be a conjugacy class of order $n$ and let $l$ be minimal such that the class $X^l$ is one of the conjugacy classes above of order $k$. Then, if $k$ is even we construct the $(n,\tfrac{k}{2})$-serpent, if $k=3$ the $(n|3)$-serpent and if $k=1$ the $(n|1)$-serpent. The union of the serpents thus obtained is equal to $\mathfrak{M}$.

The underlying process at work here are the observations on harmonics from \cite[\S6]{ConwayNorton} giving a  precise connection between the moonshine group associated to a conjugacy class and those associated to its powers. The conjugacy classes determined by $24J$ and $8C$ are exactly the harmonics of the three lowest order conjugacy classes $1A,2A$ and $2B$.

We expect similar results to hold for all umbral groups and give the details for the largest Mathieu group $M_{24}$.

\section{Conway's Big Picture} \label{bigpicture}

In \cite{Conway} John H. Conway introduced a picture that makes it easier to describe groups commensurable with the modular group $\Gamma = PSL_2(\Z)$, in particular the discrete groups appearing in monstrous moonshine \cite{ConwayNorton}. In this section we will recall the construction of this {\em big picture} and use it to define the $171$ {\em moonshine groups}, with a focus on examples and explicit calculations. 

\subsection{The moonshine groups}

Monstrous moonshine \cite{ConwayNorton} assigns to every conjugacy class of an element $g$ of the monster sporadic simple group $\mathbf{M}$ a specific genus zero subgroup $\Gamma_g$ of $PSL_2(\mathbb{R})$ which is {\em commensurable} with the modular group $\Gamma$ (that is, $\Gamma_g \cap \Gamma$ has finite index in $\Gamma_g$ and $\Gamma$). There are exactly $171$ such {\em moonshine groups}, usually given in Conway-Norton notation
\[
\Gamma_g = n|h+e,f,\hdots \]
see Figure~\ref{moonshinegroups} which is taken from \cite[Table 2]{ConwayNorton}. Here, we have that

{\tiny
\begin{figure} \label{moonshinegroups}
\[
\begin{array}{|c|l||c|l||c|l||c|l||c|l||c|l|}
\hline
1A & 1 & 10B & 10+5 & 18B & 18+ & 26A & 26+ & 39A & 39+ & 60C & 60+4,15,60 \\
2A & 2+ & 10C & 10+2 & 18C & 18+9 & 26B & 26+26 & 39B & 39|3+ & 60D & 60+12,15,20 \\
2B & 2- & 10D & 10+10 & 18D & 18- & 27A & 27+ & 39CD & 39+39 & 60E & 60|2+5,6,30 \\
3A & 3+ & 10E & 10- & 18E & 18+18 & 27B & 27+ & 40A & 40|4+ & 60F & 60|6+10 \\
3B & 3- & 11A & 11+ & 19A & 19+ & 28A & 28|2+ & 40B & 40|2+ & 62AB & 62+ \\
3C & 3|3 & 12A & 12+ & 20A & 20+ & 28B & 28+ & 40CD & 40|2+20 & 66A & 66+ \\
4A & 4+ & 12B & 12+4 & 20B & 20|2+ & 28C & 28+7 & 41A & 41+ & 66B & 66+6,11,66 \\
4B & 4|2+ & 12C & 12|2+ & 20C & 20+4 & 28D & 28|2+14 & 42A & 42+ & 68A & 68|2+ \\
4C & 4- & 12D & 12|3+ & 20D & 20|2+5 & 29A & 29+ & 42B & 42+6,14,21 & 69AB & 69+ \\
4D & 4|2- & 12E & 12+3 & 20E & 20|2+10 & 30A & 30+6,10,15 & 42C & 42|3+7 & 70A & 70+ \\
5A & 5+ & 12F & 12|2+6 & 20F & 20+20 & 30B & 30+ & 42D & 42+3,14,42 & 70B & 70+10,14,35 \\
5B & 5- & 12G & 12|2+2 & 21A & 21+ & 30C & 30+3,5.15 & 44AB & 44+ & 71AB & 71+ \\
6A & 6+ & 12H & 12+12 & 21B & 21+3 & 30D & 30+5,6,30 & 45A & 45+ & 78A & 78+ \\
6B & 6+6 & 12I & 12- & 21C & 21|3+ & 30E & 30|3+10 & 46AB & 46+23 & 78BC & 78+6,26,39 \\
6C & 6+3 & 12J & 12|6 & 21D & 21+21 & 30F & 30+2,15,30 & 46CD & 46+ & 84A & 84|2+ \\
6D & 6+2 & 13A & 13+ & 22A & 22+ & 30G & 30+15 & 47AB & 47+ & 84B & 84|2+6,14,21 \\
6E & 6- & 13B & 13- & 22B & 22+11 & 31AB & 31+ & 48A & 48|2+ & 84C & 84|3+ \\
6F & 6|3 & 14A & 14+ & 23AB & 23+ & 32A & 32+ & 50A & 50+ & 87AB & 87+ \\
7A & 7+ & 14B & 14+7 & 24A & 24|2+ & 32B & 32|2+ & 51A & 51+ & 88AB & 88|2+ \\
7B & 7- & 14C & 14+14 & 24B & 24+ & 33A & 33+11 & 52A & 52|2+ & 92AB & 92+ \\
8A & 8+ & 15A & 15+ & 24C & 24+8 & 33B & 33+ & 52B & 52|2+26 & 93AB & 93|3+ \\
8B & 8|2+  & 15B & 15+5 & 24D & 24|2+3 & 34A & 34+ & 54A & 54+ & 94AB & 94+ \\
8C & 8|4+ & 15C & 15+15 & 24E & 24|6+ & 35A & 35+ & 55A & 55+ & 95AB & 95+ \\
8D & 8|2-  & 15D & 15|3 & 24F & 24|4+6 & 35B & 35+35 & 56A & 56+ & 104AB & 104|4+ \\
8E & 8- & 16A & 16|2+ & 24G & 24|4+2 & 36A & 36+ & 56B & 56|4+14 & 105A & 105+ \\
8F & 8|4- & 16B & 16- & 24H & 24|2+12 & 36B & 36+4 & 57A & 57|3+ & 110A & 110+ \\
9A & 9+ & 16C & 16+ & 24I & 24+24 & 36C & 36|2+ & 59AB & 59+ & 119AB & 119+ \\
9B & 9- & 17A & 17+ & 24J & 24|12 & 36D & 36+36 & 60A & 60|2+ & & \\
10A & 10+ & 18A & 18+2 & 25A & 25+ & 38A & 38+ & 60B & 60+ & & \\
\hline
\end{array}
\]
\caption{The Moonshine Groups}
\end{figure}
}
\begin{itemize}
\item{$n$ is the order of  $g \in \mathbf{M}$,}
\item{$h$ is a divisor of $24$ and of $n$,}
\item{$x=e,f,\hdots$ are exact divisors of $m=\tfrac{n}{h}$, that is $(x,\frac{m}{x})=1$.}
\end{itemize}

If $h=1$ we drop $|h$ from the notation and we abbreviate with $+$ the set of all exact divisors of $\tfrac{n}{h}$ and we write a minus sign if there are no exact divisors involved in the description of $\Gamma_g$.

The congruence subgroup $\Gamma_0(N)$ consists of the images in $\Gamma$ of all matrices of the form 
{\tiny $\begin{bmatrix} a & b \\ cN & d \end{bmatrix}$} with $a,b,c,d \in \Z$ and determinant $ad-Nbc=1$.

 From now on we will assume that $N=n.h$ with $h$ a divisor of $24$ and of $n$. Then the set of all matrices in $SL_2(\Q)$ of the form
{\tiny $\begin{bmatrix} a & \frac{b}{h} \\ cn & d \end{bmatrix}$}, with $a,b,c,d \in \Z$ and determinant $ad-\frac{n}{h}bc=1$, 
form a group and its image in $PSL_2(\Q)$ will be denoted by $\Gamma_0(n|h)$. This group is the conjugate of $\Gamma_0(\frac{n}{h})$ by {\tiny $\begin{bmatrix} h & 0 \\ 0 & 1 \end{bmatrix}$}. Further note that $\Gamma_0(N)$ is a normal subgroup of $\Gamma_0(n|h)$.

If $e$ is an exact divisor of $m=\tfrac{n}{h}$, then the set $w_e$ of all matrices of the form {\tiny $\begin{bmatrix} ae & \frac{b}{h} \\ cn & de \end{bmatrix}$}, with $a,b,c,d \in \Z$ and determinant $e^2 ad - \frac{n}{h}bc = e$, form a single coset of $\Gamma_0(n|h)$. As these cosets satisfy the relations
\[
w_e^2 \equiv 1~\text{mod}~\Gamma_0(n|h),~\quad w_e.w_f \equiv w_f.w_e \equiv w_g~\text{mod}~\Gamma_0(n|h) \]
with $g=\tfrac{e}{(e,f)}.\tfrac{f}{(e,f)}$, they generate a subgroup of involutions in the normalizer of $\Gamma_0(n|h)$ in $PSL_2(\mathbb{R})$. They are called {\em Atkin-Lehner involutions} of $\Gamma_0(n|h)$ and traditionally one calls $w_{n/h}$ the {\em Fricke involution}.

For exact divisors $e,f,\hdots$ of $m=\tfrac{n}{h}$ one denotes $\Gamma_0(n|h)+e,f,\hdots$ for the group obtained from $\Gamma_0(n|h)$ by adjoining its particular Atkin-Lehner involutions $w_e,w_f,\hdots$. Further note that $\Gamma_0(N)$ is a normal subgroup of $\Gamma_0(n|h)+e,f,\hdots$.

The moonshine group $n|h+e,f,\hdots$ is then a specific subgroup of index $h$ in $\Gamma_0(n|h)+e,f,\hdots$, the kernel of the group-morphism
\[
\lambda : \Gamma_0(n|h)+e,f,\hdots \rTo \C^* \]
which is trivial on the normal subgroup $\Gamma_0(N)$ as well as on the Atkin-Lehner involutions $w_e,w_f,\hdots$ and such that 
\[
\lambda(\begin{bmatrix} 1 & \frac{1}{h} \\ 0 & 1 \end{bmatrix})=e^{-\frac{2 \pi i }{h}}, \quad \text{and} \quad \lambda(\begin{bmatrix} 1 & 0 \\ n & 1 \end{bmatrix}) = e^{\pm \frac{2 \pi i}{h}} \]
the last value with a $+$ sign if {\tiny $\begin{bmatrix} 0 & -1 \\ N & 0 \end{bmatrix}$} $\in \Gamma_0(n|h)+e,f,\hdots$, and with a $-$ sign otherwise. In \cite[\S 3]{ConwayMcKay} it is shown that $\lambda$ is indeed well-defined.

\subsection{The big picture}

The discrete groups introduced before are best understood via their action on projective classes of $2$-dimensional integral lattices.

Let $L_1 = \langle \vec{e}_1,\vec{e}_2 \rangle = \Z \vec{e}_1 \oplus \Z \vec{e}_2$ be a fixed $2$-dimensional integral lattice. A lattice $L$ is said to be {\em commensurable} with $L_1$ if their intersection has finite index in both of them. Two such lattices $L$ and $L'$ are in the same {\em projective class} if there is a non-zero rational number $\lambda$ such that $L' = \lambda.L$.

Any lattice $L$ commensurable with $L_1$ is in the same projective class as a unique lattice in {\em standard form}
\[
L_{M,\frac{g}{h}} = \langle M \vec{e}_1+ \frac{g}{h} \vec{e}_2,\vec{e}_2 \rangle = \Z (M \vec{e}_1+ \frac{g}{h} \vec{e}_2) \oplus \Z \vec{e}_2 \]
with $M$ a strictly positive rational number and $\frac{g}{h}$ is a proper fraction in its least terms, that is $0 \leq g < h$ with $(g,h)=1$. That is, the set of projective classes of $2$-dimensional lattices commensurable with $L_1$ can be identified with $\Gamma \backslash PGL_2^+(\Q) = \Q_+ \times \Q/\Z$. Here, $PGL_2^+(\Q)$ is the group of all elements in $PGL_2(\Q)$ with strictly positive determinant, and the bijection is given by the assignment
\[
\Q_+ \times \Q/\Z \rTo \Gamma \backslash PGL_2^+(\Q) \qquad \begin{bmatrix} M & \frac{g}{h} \\ 0 & 1 \end{bmatrix} \mapsto \Gamma.\begin{bmatrix} M & \frac{g}{h} \\ 0 & 1 \end{bmatrix} = L_{M,\frac{g}{h}} \]
see \cite[Prop. 2.6]{Duncan}.

If $M \in \N_+$ we omit the comma and write $L_{M \frac{g}{h}}$ and call the lattice {\em number-like}. If in addition $g=0$ we write $L_M$ and call the lattice a {\em number-lattice}. With $X=M,\frac{g}{h}$ we denote the projective class of $L_{M,\frac{g}{h}}$. It will be convenient to associate to this class the matrix
\[
\alpha_X = \begin{bmatrix} M & \frac{g}{h} \\ 0 & 1 \end{bmatrix} \in GL_2^+(\Q) \]
For two classes $X,Y \in \Q_+ \times \Q/\Z$ let $a_{XY}$ be the smallest positive rational number such that $a_{XY}.\alpha_X \alpha_Y^{-1} \in GL_2(\Z)$. The {\em hyper-distance} between the two classes is then
\[
\delta(X,Y) = det(a_{XY}.\alpha_X \alpha_Y^{-1}) \in \N_+ \]
Conway showed in \cite[p. 329]{Conway} (see also \cite[\S 2.5]{Duncan} and \cite[\S 2.2]{Plazas} for a slightly different treatment) that this definition is symmetric and that the $log$ of the hyper-distance is a proper distance function on the projective classes of all lattices commensurable with $L_1$.

Further, if $g \in PGL_2^+(\Q)$ such that $\alpha_X.g \in \Gamma.\alpha_{X'}$ and $\alpha_Y.g \in \Gamma.\alpha_{Y'}$ (here, and elsewhere, we write $\alpha_Z$ for its image in $PGL_2(\Q)$), then it follows immediately from the definition of hyper-distance that
\[
\delta(X,Y) = \delta(X.g,Y.g) = \delta(X',Y') \]
that is, the maps induced by by left multiplication by elements of $PSL_2(\Q)^+$ are isometries, where defined. 

\begin{definition} {\em Conway's big picture} $\mathfrak{P}$ is the graph having as vertices the projective classes of lattices commensurable with $L_1$, that is all elements of $\Q_+ \times \Q/\Z$, and there is an edge between classes $X$ and $Y$ if and only if
\[
\delta(X,Y) = p \]
with $p$ a prime number. Alternatively, $\mathfrak{P}$ is the Hasse diagram of the poset structure on $\Q_+ \times \Q/\Z$ with unique minimal element $1$ defined by
\[
X \leq Y \quad \text{iff} \quad \delta(1,X) \leq \delta(1,Y)~\text{and}~\delta(1,Y)=\delta(1,X).\delta(X,Y) \]
The subgraph $\mathfrak{C}$ on all classes of number-lattices is called {\em the big cell}. It is the Hasse diagram of the poset-structure on $\N_+$ given by division, that is, $M \leq N$ iff $M | N$.
\end{definition}

For a number-like class $X=M \frac{g}{h}$ we have that the hyper-distance from $1$ is equal to $h^2M$. In fact we have
\[
1 \leq hM \leq M\frac{g}{h} \]
and the number class $hM$ is the unique class in the big cell $\mathfrak{C}$ at minimal hyper-distance from $M \frac{g}{h}$.

Conway showed that each class $X=M,\frac{g}{h}$ has exactly $p+1$ neighbors at hyper-distance $p$ for a prime number $p$. These are the classes
\[
X_k = \frac{M}{p},\frac{g}{hp}+\frac{k}{p}~\text{mod}~1~\quad \text{for $0 \leq k < p$}~\quad~\text{and}~\quad X_p=pM,\frac{pg}{h}~\text{mod}~1 \]
It is convenient to consider these $p$-neighbors as the classes corresponding to the matrices obtained by multiplying $\alpha_{X}$ {\em on the left} with the matrices
\[
P_k = \begin{bmatrix} \frac{1}{p} & \frac{k}{p} \\ 0 & 1 \end{bmatrix}~\quad~\text{for $0 \leq k < p$, and}~\quad~P_p = \begin{bmatrix} p & 0 \\ 0 & 1 \end{bmatrix} \]
The classes at hyper-distance a $p$-power from $1$ form a $p+1$-valent tree, and the big picture $\mathfrak{P}$ itself 'factorizes' as a product of these $p$-adic trees, see \cite[p. 332]{Conway}. 

We will now clarify the relevance of the big picture $\mathfrak{P}$ for the groups appearing in monstrous moonshine.

\begin{definition} Let $\mathcal{X}=\{ X_i~|~i \in I \}$ be a set of classes in the big picture $\mathfrak{P}$. We define the following subgroups of $PGL_2^+(\Q)$
\[
\Gamma_0(\mathcal{X}) = \{ g \in PGL_2^+(\Q)~|~\forall i \in I~:~~\alpha_{X_i}.g \in \Gamma.\alpha_{X_i} \} \]
and
\[
N(\mathcal{X}) = \{ g \in PGL_2^+(\Q)~|~\forall i \in I, \exists j \in I~:~\alpha_{X_i}.g \in \Gamma.\alpha_{X_j} \} \]
as, respectively, the {\em point-wise} and {\em set-wise} stabilizer subgroups of $\mathcal{X}$. 
\end{definition}

\begin{lemma} \label{lem1} With these notations, we have (for $n,h,N \in \N_+$)
\begin{enumerate}
\item{$\Gamma_0(\{ 1 \}) = \Gamma$ and $\Gamma_0(\{ X \}) = \alpha_X^{-1}.\Gamma.\alpha_X$.}
\item{$\Gamma_0(\{ 1,N \}) = \Gamma_0(N) = \Gamma_0(\{ e~:~e | N \})$.}
\item{$\Gamma_0(\{ n,h \}) = \Gamma_0(n|h)= \Gamma_0(\{ e~:~h | e | n \})$}
\end{enumerate}
\end{lemma}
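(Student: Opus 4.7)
The strategy is to unpack the definitions and reduce each claim to an explicit matrix-entry computation. Throughout, I will use the basic reformulation: the condition $\alpha_X g \in \Gamma \alpha_X$ is equivalent, by right-multiplying by $\alpha_X^{-1}$, to the conjugation statement $\alpha_X g \alpha_X^{-1} \in \Gamma$.

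Part (1) is immediate: since $\alpha_1$ is the identity matrix, the first equality is tautological, and the reformulation above gives $\Gamma_0(\{X\}) = \alpha_X^{-1}\Gamma\alpha_X$ directly.

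For part (2), I would write a generic element as $g = \begin{bmatrix} a & b \\ c & d \end{bmatrix}$; then $g \in \Gamma$ forces integer entries with $ad-bc = 1$, and a direct calculation gives $\alpha_N g \alpha_N^{-1} = \begin{bmatrix} a & Nb \\ c/N & d \end{bmatrix}$, which is in $\Gamma$ iff $N \mid c$. This is precisely the defining condition for $\Gamma_0(N)$. For the equality $\Gamma_0(N) = \Gamma_0(\{e : e \mid N\})$, the inclusion $\supseteq$ is automatic (a larger stabilized set gives a smaller stabilizer), and the content-bearing direction is that for any divisor $e$ of $N$ one has $e \mid N \mid c$, hence $\alpha_e g \alpha_e^{-1} = \begin{bmatrix} a & eb \\ c/e & d \end{bmatrix}$ has integer entries.

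Part (3) follows the same template with extra divisibility bookkeeping. For $g = \begin{bmatrix} a & b \\ c & d \end{bmatrix} \in PGL_2^+(\Q)$ I would compute the two conjugates
\[
\alpha_h g \alpha_h^{-1} = \begin{bmatrix} a & hb \\ c/h & d \end{bmatrix}, \qquad \alpha_n g \alpha_n^{-1} = \begin{bmatrix} a & nb \\ c/n & d \end{bmatrix}.
\]
Requiring both to lie in $\Gamma$, and using $h \mid n$, the binding constraints are $hb \in \Z$ and $c/n \in \Z$, which give $b = b'/h$ and $c = nc'$ with $b',c' \in \Z$; the determinant then reads $ad - (n/h)b'c' = 1$, matching exactly the defining shape of $\Gamma_0(n|h)$. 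For the intermediate-divisor equality, again $\supseteq$ is automatic, and the non-trivial inclusion reduces to the single observation that whenever $h \mid e \mid n$ the matrix $\alpha_e g \alpha_e^{-1} = \begin{bmatrix} a & eb'/h \\ nc'/e & d \end{bmatrix}$ is integral, because $e/h$ and $n/e$ are then both positive integers.

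The only real obstacle is the divisibility bookkeeping in part (3): one must verify that pinning down the two endpoints $n$ and $h$ of the thread is enough to force pointwise stabilization of every class on the thread $\{e : h \mid e \mid n\}$. Once the form $b = b'/h$, $c = nc'$ has been extracted from the endpoint constraints, this extra step reduces to the single integrality check above, so no further ideas are needed.
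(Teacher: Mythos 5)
Your proof is correct, but for the substantive half of parts (2) and (3) you take a genuinely different route from the paper. Where you agree: the endpoint identifications $\Gamma_0(\{1,N\})=\Gamma_0(N)$ and $\Gamma_0(\{n,h\})=\Gamma_0(n|h)$ are in both cases the same conjugation computation ($\Gamma \cap \alpha_N\Gamma\alpha_N^{-1}=\Gamma_0(N)$, which the paper simply cites, and which your explicit extry-by-entry calculation spells out, including the correct extraction of the shape $b=b'/h$, $c=nc'$ with determinant $ad-\tfrac{n}{h}b'c'=1$). Where you diverge: to show the endpoint stabilizer fixes every intermediate class $e$ with $h\mid e\mid n$, you verify integrality of $\alpha_e g \alpha_e^{-1}$ directly, whereas the paper argues geometrically -- elements of the joint stabilizer act as isometries for the hyper-distance, and since there is a \emph{unique} lattice in the thread at hyper-distance $k$ from one endpoint and $\tfrac{N}{k}$ from the other, each intermediate class must be fixed because its defining metric data is preserved. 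Your computation is more elementary and self-contained (it needs no appeal to the isometry property or to the factorization of $\mathfrak{P}$ into trees, only the divisibility of $e/h$ and $n/e$), at the cost of being tied to the explicit diagonal form of number lattices; the paper's argument is the one that scales, since the same isometry-plus-uniqueness reasoning is reused later for arbitrary $(X|Y)$-threads in the discussion of snakes and serpents, where no such clean matrix normal form is available. One small point of hygiene worth making explicit in your write-up: in part (3) the element $g$ lives in $PGL_2^+(\Q)$, so before reading off integrality conditions you should fix the representative with determinant $1$ (as the paper's definition of $\Gamma_0(n|h)$ implicitly does); with that normalization your ``binding constraints'' analysis, using $h\mid n$ to discard the redundant conditions $nb\in\Z$ and $c/h\in\Z$, is exactly right.
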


\begin{proof} $(1)$ follows from the definition. $(2)$ follows as $\Gamma \cap \alpha_N.\Gamma.\alpha_N^{-1} = \Gamma_0(N)$. The second part then follows from the isometric property and the fact that there is a unique number lattice in $\{ e : 1 | e | N \}$ at hyper-distance $k$ from $1$ and $\tfrac{N}{k}$ from $N$. $(3)$ is proved in a similar manner.
\end{proof}

\subsection{Snakes and serpents} \label{snakes}

Because of lemma~\ref{lem1} we call for $h|n$ the subgraph of $\mathfrak{P}$ consisting of the number-lattices $\{ e~:~h|e|n \}$ the {\em $(n|h)$-thread}. More generally, if $X \geq Y$ then we call the {\em $(X|Y)$-thread} the subgraph on the lattices $\{ Z~|~X \geq Z \geq Y \}$. 

It follows from the factorization of the big picture $\mathfrak{P}$ that for $\delta(X,Y)=N$ and any divisor $k | N$ there is a unique lattice $Z$ in the $(X,Y)$-thread such that $\delta(X,Z)=k$ and $\delta(Y,Z)=\tfrac{N}{k}$. If we denote $\Gamma_0(X|Y)$ for $\Gamma_0(\{ X,Y \})$, then this group is also the point-wise stabilizer of the $(X|Y)$-thread.

The $(N|1)$-thread has a symmetry group of order $2^k$ where $k$ is the number of distinct prime divisors of $N$. These symmetries correspond to the Atkin-Lehner involutions $w_e$ introduced before for exact divisors $e$ of $N$. If $N=e.f$, then a number-lattice $k=x.y$ with $x | e$ and $y | f$ in the $(N|1)$-thread is send under this involution to the number-lattice $w_e(k)=\tfrac{e}{x}.y$. 

Indeed, for $w_e=${\tiny $\begin{bmatrix} ae & b \\ cN & de \end{bmatrix}$} with determinant $e$ we have that the image of $\alpha_k.w_e.\alpha_{w_e(k)}^{-1}$ in $PGL_2(\Q)$ is equal to
\[
\frac{1}{x}.\begin{bmatrix} xy & 0 \\ 0 & 1 \end{bmatrix}.\begin{bmatrix} ae & b \\ efc & de \end{bmatrix}.\begin{bmatrix} \frac{x}{ey} & 0 \\ 0 & 1 \end{bmatrix} = \begin{bmatrix} xa & yb \\ \frac{fc}{y} & \frac{de}{x} \end{bmatrix} \in \Gamma \]
A similar story applies to the $(n|h)$-thread. There are $2^l$ symmetries for the $(n|h)$-thread where $l$ is the number of distinct primes in $m=\tfrac{n}{h}$. These symmetries again correspond to the Atkin-Lehner involutions, which are in turn the conjugates by {\tiny $\begin{bmatrix} h & 0 \\ 0 & 1 \end{bmatrix}$} of the corresponding involutions of the $(\frac{n}{h}|1)$-thread as
\[
\begin{bmatrix} h^{-1} & 0 \\ 0 & 1 \end{bmatrix}.\begin{bmatrix} ae & b \\ c \frac{n}{h} & de \end{bmatrix}.\begin{bmatrix} h & 0 \\ 0 & 1 \end{bmatrix} = \begin{bmatrix} ae & \frac{b}{h} \\ cn & de \end{bmatrix} \]
Observe that these involutions also act as symmetries on the full $(N|1)$-thread where $N=h.n$. This is the picture for the groups $\Gamma_0(n|h)+e,f,\hdots$ for all $h | n$. 

We have seen that $\Gamma_0(N)$ fixes all classes in the $(N|1)$-thread, but it may fix more lattices. In fact, by \cite[Theorem p.336]{Conway} all classes stabilized by $\Gamma_0(N|1)$ are number-like and consists exactly of the classes $M \frac{g}{h}$ where $h$ is a divisor of $24$ such that $h^2 | N$ and $1 | M | \frac{N}{h^2}$. The subgraph of $\mathfrak{P}$ on this set of lattices Conway calls the {\em $(N|1)$-snake}. 

\begin{definition} Let $h$ be a divisor of $24$ and of $n$, and let $N=h.n$. We call the {\em $(n|h)$-serpent} the subgraph of the big picture $\mathfrak{P}$ on the number-like classes 
\[
M\frac{g}{k} \qquad \text{with} \quad k | h~\text{and}~1 | M | \frac{N}{k^2} \]
In particular, the $(n|h)$-serpent contains the full $(N|1)$-thread and is a part of the $(N|1)$-snake. Further, note that the $(N|1)$-serpent coincides with the $(N|1)$-thread which is all we need of the $(N|1)$-snake to construct the moonshine groups $N+e,f,\hdots$.
\end{definition}

In the next subsection we will see that the $(n|h)$-serpent determines the moonshine group $n|h+e,f,\hdots$. As all moonshine groups contain a congruence subgroup $\Gamma_0(N)$ as a normal subgroup, it is important to recall the description of the normalizer of $\Gamma_0(N)$ in $PSL_2(\mathbb{R})$.

If $h$ is the largest divisor of $24$ such that $h^2 | N$, then Conway calls the {\em spine} of the $(N|1)$-snake the subgraph  on all classes whose hyper-distance to the periphery is equal to $h$. For $n=\frac{N}{h}$, the spine of the $(N|1)$-snake is equal to the $(n|h)$-thread. The upshot of this terminology is that the normalizer of $\Gamma_0(N)$ fixes the $(N|1)$-snake setwise and must then also fix the spine setwise, so must be equal to $\Gamma_0(n|h)+$, which is the Atkin-Lehner theorem.

\begin{example} \label{8C} The $(8|4)$-serpent, relevant for the moonshine group $8|4+$ associated to conjugacy class $8C$. Here, $N=32$, so for $k=1$ we have the $(32|1)$-thread, indicated in red. For $k=2$ we have the lattices $M \frac{1}{2~}$ for $M | \tfrac{32}{4}=8$ and for $k=4$ the lattices $M \frac{1}{4}$ and $M \frac{3}{4}$ for $M=1,2$.
\[
\xymatrix@=.4cm{
& & & 2\frac{1}{4} \ar@{-}[d]  & & \\
& 1 \frac{1}{2} \ar@{-}[d] & & 4 \frac{1}{2} \ar@{-}[r] \ar@{-}[d] & 2 \frac{3}{4} & \\
1 \ar@[red]@{-}[r] & 2 \ar@[red]@{-}[r] & 4 \ar@{-}[d] \ar@[red]@{=}|\times[r] & 8 \ar@[red]@{-}[r] & 16 \ar@[red]@{-}[r] \ar@{-}[d] & 32 \\
& 1 \frac{3}{4} \ar@{-}[r] & 2 \frac{1}{2} \ar@{-}[d] & & 8 \frac{1}{2} & \\
& & 1 \frac{1}{4} & & & }
\]
The spine of the $(32|1)$-snake is the $(8|4)$-thread (indicated with a double red edge). The unique Atkin-Lehner involution involved is point-wise reflection over the middle of the $(8|4)$-thread (indicated with a $\times$). Note that in this case the $(8|4)$-serpent coincides with the $(32|1)$-snake.
\end{example}

\subsection{Roots of unity} \label{roots}

In noncommutative geometry, the big picture $\mathfrak{P}$ shows up as the basis of the regular representation of the Bost-Connes algebra $\mathbb{BC}$, see \cite[\S 4.3]{Plazas} and \cite[\S 2.4]{LeBruyn}. This algebra is at the heart of Connes' approach to the Riemann hypothesis and encodes the action of the abelianised Galois group $Gal(\overline{\Q}/\Q)^{ab}$ on the group of all roots of unity, which is compatible with the $\lambda$-ring structure on the representation rings of finite groups.

For this connection, it is useful to view a number-like class $M \frac{g}{h} \in \mathfrak{P}$ as a {\em primitive $h$-th root of unity centered at the number-lattice $h.M$}. Note that we have a full copy of the group $\pmb{\mu}_h$ of $h$-th roots of unity centered at $h.M$ consisting of the number-like classes $M \frac{e}{d}$ with $d | h$, which are all at hyper-distance $h$ from $h.M$.

Recall that the number of classes at hyper-distance $h$ from a fixed class is equal to $\psi(h)=h \prod_{p|h}(1+\tfrac{1}{p})$ where $\psi$ is Dedekind's psi function. Among the $\psi(h)$ classes at hyper-distance $h$ from $h.M$ there are several copies of $\pmb{\mu}_h$, depending on the chosen standard from of projective classes, all containing the same set of $\phi(h)$ classes corresponding to the canonical primitive $h$-th roots of unity centered at $h.M$, where $\phi(h)$ is the Euler function. What singles these classes out is the fact that on the path to them from $h.M$ we never used the operator $P_p$ for a prime $p$ dividing $h$.

For the construction of the moonshine groups $n|h+e,f,\hdots$ it is useful to consider also the {\em reverse canonical form} for a class $L_{M,\frac{g}{h}}$ which occurs by swapping the roles of the standard basis vectors $\vec{e}_1$ and $\vec{e}_2$. Then, the lattice $L_{M,\frac{g}{h}}$ lies in the same class as the lattice
\[
 \langle \frac{1}{h^2M} \vec{e}_2 + \frac{g'}{h} \vec{e}_1,\vec{e}_1 \rangle = \Z (\frac{1}{h^2M} \vec{e}_2 + \frac{g'}{h} \vec{e}_1) \oplus \Z \vec{e}_1 \]
with $g'$ the inverse of $g$ modulo $h$, and we write $(\frac{1}{h^2M},\frac{g'}{h})$ for the corresponding projective class. That is we will use these two notations for the same class in $\mathfrak{P}$
\[
X=M,\frac{g}{h} = (\frac{1}{h^2M},\frac{g'}{h}) \]
and use the matrix $\beta_X=$ {\tiny $\begin{bmatrix} 1 & 0 \\ \frac{g'}{h} & \frac{1}{M.h^2} \end{bmatrix}$} instead of the matrix $\alpha_X$.

As the character $\lambda$, used in defining $n|h+e,f,\hdots$ from $\Gamma_0(n|h)+e,f,\hdots$, is trivial on $\Gamma_0(N)$ and on the relevant Atkin-Lehner involutions it suffices in order to describe $n|h+e,f,\hdots$ to know the structure of the finite group
\[
\Gamma_0(n|h)^* = \Gamma_0(n|h)/\Gamma_0(N) \]
This group is generated by the action of $x=${\tiny $\begin{bmatrix} 1 & \frac{1}{h} \\ 0 & 1 \end{bmatrix}$} and $y=$ {\tiny $\begin{bmatrix} 1 & 0 \\ n & 1 \end{bmatrix}$} on the classes in the $(n|h)$-snake. The relevance of the introduction of the two sets of roots of unity centered at a number-class is that $x$ will act as a power-map on the first set, whereas $y$ acts as a power-map on the second set.

\begin{example} \label{tetra}  For $h=3$ the neighbors in the $3$-tree of $3M$ are the $4$-classes (with representations in the two standard forms)
\[
M \leftrightarrow (\frac{1}{M},0), \quad M \frac{1}{3} \leftrightarrow (\frac{1}{9M},\frac{1}{3}), \quad M \frac{2}{3} \leftrightarrow (\frac{1}{9M},\frac{2}{3}), \quad 9M \leftrightarrow (\frac{1}{9M},0) \]
It is convenient to view these classes as the vertices of a tetrahedron with center of gravity $3M\leftrightarrow (\frac{1}{3M},0)$.
\[
\xymatrix@=.3cm{& & M \frac{1}{3} \ar@{.}[llddd] \ar@{.}[rdd] \ar@{.}[rdddd] \ar@[red]@{-}[dd] & \\
& & & \\
& & 3M \ar@[red]@{-}[rdd] & M \frac{2}{3} \ar@{.}[llld] \ar@{.}[dd] \ar@[red]@{-}[l] \\
M \ar@[red]@{-}[rru] \ar@{.}[rrrd] & &  & \\
& & & 9M} \]
The two sets of $3$-rd roots of unity, centered at $3M$ consist of the classes
\[
\{ M,M\frac{1}{3},M \frac{2}{3} \} \quad \text{and} \quad \{ 9M,M\frac{1}{3},M\frac{2}{3} \} \]
Power maps in the first copy of $\pmb{\mu}_3$ correspond to rotations with pole vertex $9M$ and power maps in the second copy are rotations with pole vertex $M$.

The $(3|3)$-serpent corresponds to the situation where $M=1$.In order to describe the moonshine group $3|3$ corresponding to conjugacy class 3C, we need to describe the action of $x = ${\tiny $\begin{bmatrix} 1 & \frac{1}{3} \\ 0 & 1 \end{bmatrix}$} on a class $M \frac{g}{h}$  is given by right-multiplication of $\alpha_{M\frac{g}{h}}$. As
\[
\alpha_{1}.x=\alpha_{1\frac{1}{3}},\quad \alpha_{1\frac{1}{3}}.x=\alpha_{1\frac{2}{3}},\quad \alpha_{1\frac{2}{3}}.x = \alpha_1,~\quad \text{and} \quad \alpha_{9}.x=\alpha_9 \]
That is, $x$ can be viewed as rotation with  a pole  through the class $9$. To study the action of $y =${\tiny  $\begin{bmatrix} 1 & 0 \\ 3 & 1 \end{bmatrix}$} it is best to use the second standard form and then the action of $y$  is given b right-multiplication of $\beta_{M\frac{g}{h}}$
\[
\beta_{9}.y = \beta_{1\frac{1}{3}}, \quad \beta_{1\frac{1}{3}}.y=\beta_{1\frac{2}{3}},\quad \beta_{1\frac{2}{3}}.y=\beta_9, \quad \text{and} \quad \beta_1.y=\beta_1 
\]
That is, $y$ is a rotation with pole the class $1$. Clearly, both rotations generate the full rotation symmetry group of the tetrahedron $A_4$. That is 
\[
\Gamma(3|3)^* = \Gamma_0(3|3)/\Gamma_0(9) \simeq A_4 \]
 and as this group has a unique subgroup of index $3$ generated by $x.y$ and $y.x$ it follows that the moonshine group $3|3$ is 
\[
3|3 = \langle \Gamma_0(9),\begin{bmatrix} 2 & \frac{1}{3} \\ 3 & 1 \end{bmatrix},\begin{bmatrix} 1 & \frac{1}{3} \\ 3 & 2 \end{bmatrix} \rangle \]
(see also \cite[example 2.9.1]{Duncan}).
\end{example}

A description of the groups $\Gamma_0(n|h)^{\ast}$ where $h$ is a divisor of $24$ and of $n$ was given in C. Ferenbaugh's thesis \cite{Ferenbaugh}, see also \cite[\S 3]{ConwayMcKay}.

\section{The moonshine picture} 

We have seen that, in order to understand the moonshine group $n|h+e,f,\hdots$, we need the $(n|h)$-serpent. For this reason we define {\em the monstrous moonshine picture} $\mathfrak{M}$ to be the subgraph of the big picture $\mathfrak{P}$ on all vertices of all $(n|h)$-serpents corresponding to the 171 moonshine groups $n|h+e,f,\hdots$.

We will also describe the {\em extended monstrous moonshine picture} $\mathfrak{M}^e$ to be the subgraph of the big picture $\mathfrak{P}$ on all vertices of all $(N|1)$-snakes corresponding to the 171 moonshine groups $n|h+e,f,\hdots$ with $N=h.n$.

\subsection{The anaconda} \label{24J}

The largest serpent, the {\em anaconda}, hiding in $\mathfrak{M}$ is the $(24|12)$-serpent determining the moonshine group $24|12$ which is associated to conjugacy class 24J of the monster $\mathbf{M}$. We will see that it consists of $70$ lattices, about one third of the total number of lattices in $\mathfrak{M}$. Note that the $(24|12)$-serpent is equal to the $(288|1)$-snake.

The anaconda's backbone is the $(288|1)$ thread below (edges in the $2$-tree are black, those in the $3$-tree red and the colored numbers are symmetric with respect to the $(24|12)$-spine. We give two number classes the same color if and only if they are the centers of similar sets of roots of unity, as we will show now.

\[
\xymatrix{9 \ar@{-}[r] \ar@[red]@{-}[d] & \color{magenta}{18} \ar@{-}[r] \ar@[red]@{-}[d] & \color{cyan}{36} \ar@{-}[r] \ar@[red]@{-}[d] & \color{cyan}{72} \ar@{-}[r] \ar@[red]@{-}[d] & \color{magenta}{144} \ar@{-}[r] \ar@[red]@{-}[d] & 288 \ar@[red]@{-}[d] \\
\color{orange}{3} \ar@{-}[r] \ar@[red]@{-}[d] & \color{blue}{6} \ar@{-}[r] \ar@[red]@{-}[d] & \color{red}{12} \ar@{=}[r] \ar@[red]@{-}[d] & \color{red}{24} \ar@{-}[r] \ar@[red]@{-}[d] & \color{blue}{48} \ar@{-}[r] \ar@[red]@{-}[d] & \color{orange}{96} \ar@[red]@{-}[d] \\
1 \ar@{-}[r] & \color{magenta}{2} \ar@{-}[r] & \color{cyan}{4} \ar@{-}[r] & \color{cyan}{8} \ar@{-}[r] & \color{magenta}{16} \ar@{-}[r] & 32 } \]

Apart from the number classes, which are the divisors of $288$, we have to determine the number-like classes of the $(288|1)$-snake. These are the classes $M \frac{g}{h}$, with $h$ a divisor of $24$ such that $h^2$ divides $288$, and such that $(g,h) =1$. Because $288=2^5.3^2$ we have $h=1,2,4,6$ or $12$.

$h=1$ gives $M=$ $1, 2, 3, 4, 6, 8, 9, 12, 16, 18, 24, 32, 36, 48, 72, 96, 144, 288$.

$h=2$ gives the classes $M \frac{1}{2}$ for $M$ a divisor of $72$ :$1, 2, 3, 4, 6, 8, 9, 12, 18, 24, 36, 72$.

$h=3$ gives the classes $M \frac{1}{3}$ and $M \frac{2}{3}$ for $M$ a divisor of $32$ : $1, 2, 4, 8, 16, 32$.

$h=4$ gives the classes $M \frac{1}{4}$ and $M \frac{3}{4}$ for $M$ a divisor of $18$ : $1,2,3,6,9,18$.

$h=6$ gives the classes $M \frac{1}{6}$ and $M \frac{5}{6}$ for $M$ a divisor of $8$ : $1,2,4,8$.

$h=12$ gives the classes $M \frac{1}{12},M \frac{5}{12},M\frac{7}{12}$ and $M\frac{11}{12}$ for $M=1,2$.

This gives a total of $70$ classes. Next, we will focus on the center $C=M.h$ of each class $M \frac{g}{h}$ which we view as a primitive $h$-th root of unity centered at $M.h$.
{\Small
\[
\begin{array}{|c|l||c|l|}
\hline
C & h & C & h \\
\hline
1 & 1 & 18 & 1,2 \\
2 & 1,2 & 24 & 1,2,3,4,6,12 \\
3 & 1,3 & 32 & 1 \\
4 & 1,2,4 & 36 & 1,2,4 \\
6 & 1,2,3,6 & 48 & 1,2,3,6 \\
8 & 1,2,4 & 72 & 1,2,4 \\
9 & 1 & 96 & 1,3 \\
12 & 1,2,3,4,6,12 & 144 & 1,2 \\
16 & 1,2 & 288 & 1 \\
\hline
\end{array}
\]
}
Number classes $C$ having the same classes of primitive roots of unity centered there will have the same local structure in the anaconda. We will now describe the different types of local structures.

\subsection{The local structures} 

Our strategy to determine the structure of the moonshine picture(s) will be similar: for each $(n|k)$-serpent (or $(N|1)$-snake) we will first determine all number-like classes appearing in it. Next we will partition the number classes with respect to the set of primitive roots of unity centered at them. It then remains to determine, for a given set of $n$-roots of unity where $n | 24$, the local structure of the picture at the number-class.

To draw these local structures we will not draw all edges, but just one choice of path from the number class to the number-like class. We will now explain how to add the remaining edges. Whereas the big cell $\mathfrak{C}$ is commutative (being the Hasse diagram of the monoid $\N^{\times}_+$ partially ordered under division), the big picture $\mathfrak{P}$ is non-commutative, that is, if we add labels $P_i$ to the edges determined by the operator used to draw the edge (in the partial order direction) it is not always true that $P_i.Q_j = Q_jP_i$.

We have seen that the subgraph of $\mathfrak{P}$ consisting of all classes at hyper-distance a $p$-power from $1$ is a free $p+1$-valent tree $T_p$. Hence, the class $X$ at hyper-distance $p^k$ from $1$ corresponds to a unique product $P_{i_b}.\hdots.P_{i_2}.P_{i_1}P_p^a$ of the matrices $P_i$ with $0 \leq i  p$ and $P_p$ introduced before with $a+b=k$. Here, $p^a$ is the maximal number-class on the unique path from $X$ to $1$. 

That is, the matrices $P_i$ with $0 \leq i < p$ generated a free monoid and for all $0 \leq i < p$ we have $P_p.P_i = id$. In factoring $\mathfrak{P} = \ast_p~T_p$ we have to take into account that the matrices $P_i$ and $Q_j$ corresponding to different prime numbers $p$ and $q$ do not necessarily commute. Still, they satisfy a {\em meta-commutation relation} similar to that of factorization in the Hurwitz quaternions, see \cite[\S 5.2]{ConwaySmith}.

\begin{lemma} \label{meta2} For distinct primes $p$ and $q$ and for all $0 \leq i < p$ and $0 \leq j < q$ there exist unique $0 \leq k < p$ and $0 \leq l < q$ such that
\[
P_i.Q_j = Q_l.P_k \quad \text{and} \quad P_p.Q_j = Q_a.P_p~\text{with $a=pi~\text{mod}~q$} \]
\end{lemma}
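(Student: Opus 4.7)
The plan is to reduce both identities to direct matrix multiplications in $PGL_2^+(\Q)$ read modulo the left action of $\Gamma=PSL_2(\Z)$, after which everything collapses to the Chinese remainder theorem. First I would write the two products explicitly:
\[
P_i Q_j \;=\; \begin{bmatrix} 1/pq & (j+iq)/pq \\ 0 & 1 \end{bmatrix}, \qquad Q_l P_k \;=\; \begin{bmatrix} 1/pq & (k+lp)/pq \\ 0 & 1 \end{bmatrix}.
\]
Both are upper triangular with the same first column and the same $(2,2)$-entry, so they already agree everywhere except in the $(1,2)$-slot.

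The equality $P_i Q_j = Q_l P_k$ has to be interpreted as an equality of $\Gamma$-cosets, since the $P$'s and $Q$'s are the generators describing the factorization $\mathfrak{P}=\ast_p T_p$. So I need $\gamma\in\Gamma$ with $\gamma\cdot Q_l P_k = P_i Q_j$. Writing $\gamma=\begin{bmatrix}a&b\\c&d\end{bmatrix}$ and matching the first column of the product with that of $P_iQ_j$ forces $c=0$ and $a=1$; since $ad-bc=1$ this gives $d=1$ as well (the alternative $a=d=-1$ is absorbed by $-I\in PSL_2(\Z)$). Hence $\gamma$ must be the unipotent $\begin{bmatrix}1&b\\0&1\end{bmatrix}$ with $b\in\Z$, and comparing $(1,2)$-entries yields the single integrality condition
\[
k + lp \;\equiv\; j + iq \pmod{pq}.
\]

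At this point the Chinese remainder theorem finishes the job. Reducing modulo $p$ gives $k\equiv j+iq \pmod p$, which determines a unique $k$ with $0\leq k<p$; reducing modulo $q$ gives $lp\equiv j \pmod q$, hence $l\equiv p^{-1}j \pmod q$, determining a unique $l$ with $0\leq l<q$. Existence follows from the fact that the map $(k,l)\mapsto k+lp$ is a bijection from $\{0,\dots,p-1\}\times\{0,\dots,q-1\}$ to $\{0,\dots,pq-1\}$. The second identity is handled identically: computing
\[
P_p Q_j \;=\; \begin{bmatrix} p/q & pj/q \\ 0 & 1 \end{bmatrix}, \qquad Q_a P_p \;=\; \begin{bmatrix} p/q & a/q \\ 0 & 1 \end{bmatrix},
\]
the same upper-triangular rigidity forces $a\equiv pj \pmod q$, which uniquely pins down $a\in\{0,\dots,q-1\}$ (the ``$pi$'' in the statement should read $pj$, since $i$ does not appear on the left-hand side).

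There is no conceptual obstacle here; the only thing that could go wrong is the rigidity step where one argues that the admissible left multiplier $\gamma\in\Gamma$ must be upper-triangular unipotent. This rigidity, which I would highlight as the single substantive point, is what guarantees uniqueness: without it, any integer residue of $k+lp$ modulo $pq$ would a priori be possible, and the Chinese remainder count would fail.
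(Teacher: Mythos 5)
Your route is essentially the paper's, with one extra layer: both proofs come down to computing $P_iQ_j=\begin{bmatrix} \frac{1}{pq} & \frac{iq+j}{pq} \\ 0 & 1\end{bmatrix}$ and comparing $(1,2)$-entries with $Q_lP_k=\begin{bmatrix} \frac{1}{pq} & \frac{k+lp}{pq} \\ 0 & 1\end{bmatrix}$. The paper is terser: since $0\leq iq+j<pq$, division with remainder gives unique $0\leq k<p$, $0\leq l<q$ with $iq+j=lp+k$, so the first relation holds as an \emph{exact} matrix identity and no coset argument is needed. Your $\Gamma$-rigidity step is not wasted, though: for the second relation $P_pQ_j=Q_aP_p$ exact equality genuinely fails once $pj\geq q$ (one needs the unipotent shift $\begin{bmatrix}1 & -m \\ 0 & 1\end{bmatrix}$ with $pj=mq+a$), the paper's proof silently omits this half, and you are right that the statement's $a=pi\bmod q$ is a typo for $a=pj\bmod q$.

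However, one displayed step in your uniqueness argument is false: reducing $k+lp\equiv j+iq \pmod{pq}$ modulo $q$ gives $k+lp\equiv j \pmod q$, \emph{not} $lp\equiv j\pmod q$ --- the unknown $k$ does not vanish mod $q$, so the formula $l\equiv p^{-1}j\pmod q$ is wrong. Concretely, take $p=2$, $q=3$, $i=0$, $j=1$: then $iq+j=1$ forces $k=1$, $l=0$, i.e.\ $P_0Q_1=Q_0P_1$ (the relation $2_0.3_1=3_0.2_1$ in the paper's list), while your formula predicts $l\equiv 2^{-1}\cdot 1\equiv 2\pmod 3$. The repair is already in your own text: the map $(k,l)\mapsto k+lp$ is a bijection from $\{0,\dots,p-1\}\times\{0,\dots,q-1\}$ onto $\{0,\dots,pq-1\}$, and this gives uniqueness as well as existence; alternatively, determine $k\equiv iq+j\pmod p$ first and then $l\equiv p^{-1}(iq+j-k)\pmod q$. (Note also that this is a mixed-radix decomposition, not an application of the Chinese remainder theorem, which would pair residues mod $p$ and mod $q$ of the same quantity.) With that sentence corrected, your proof is complete and in fact covers more of the statement than the paper's does.
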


\begin{proof}
\[
P_i.Q_j = \begin{bmatrix} \frac{1}{pq} & \frac{iq+j}{pq} \\ 0 & 1 \end{bmatrix} \]
and as $0 \leq iq+j < pq$ we have unique $0 \leq k < p$ and $0 \leq l < q$ such that $iq+j = lp+k$. \end{proof}

\begin{lemma} \label{meta} Any class $M,\frac{g}{h} \in \mathfrak{M}$ at hyper-distance $N=p^k.q^l.\dots r^s$ from $1$ can be identified uniquely with a product
\[
P_{i_1}P_{i_2} \hdots P_{i_b} Q_{j_1} Q_{j_2} \hdots Q_{j_d} \hdots R_{z_1} R_{z_2} \hdots R_{z_v} P_p^aQ_q^c \hdots R_r^u\]
for unique $0 \leq i_a < p$, $0 \leq j_b < q, \hdots, 0 \leq z_u < r$ and with $a+b=k,c+d=l,\hdots,u+v=s$.  Here $K = P_p^aQ_q^b\hdots R_r^u$ is the unique number-class in $\mathfrak{C}$ of minimal hyper-distance from $M,\frac{g}{h}$. For a number-like class $M \frac{g}{h}$ we have $K=Mh$ and $N=Mh^2$.

\end{lemma}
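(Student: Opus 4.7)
The plan is to start from any path from $1$ to $X=M,\frac{g}{h}$ in the big picture $\mathfrak{P}$ and successively rewrite it using the cancellation identities $P_pP_i=\mathrm{id}$ (and the analogues for $q,\ldots,r$) together with the meta-commutation relations of Lemma~\ref{meta2}, until a unique normal form of the claimed shape is reached.

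First I would exploit the fact that $X$ has hyper-distance $N=p^k q^l\cdots r^s$ from $1$, so that the prime factorization of $N$ governs the prime-by-prime structure of any path. Projecting onto the free $(p+1)$-valent tree $T_p$, the image path has length $k$ and, by uniqueness of geodesics in trees, determines a unique reduced word in $\{P_0,\ldots,P_{p-1},P_p\}$; comparable statements hold for $T_q,\ldots,T_r$. This already pins down the exponents $(a,b),(c,d),\ldots,(u,v)$ with $a+b=k$, $c+d=l$, $\ldots$, $u+v=s$, as well as the branching letters $i_1,\ldots,i_b$, $j_1,\ldots,j_d$, $\ldots$, $z_1,\ldots,z_v$.

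Next I would show that an arbitrary word in the generators representing $\alpha_X$ can be brought to the claimed shape. Two rewriting moves suffice: (i) cancel any adjacent pair $P_pP_i$ using $P_pP_i=\mathrm{id}$ (and similarly for the other primes); (ii) invoke Lemma~\ref{meta2} to push each ``pure power'' letter $P_p$ past any letter $Q_j$ coming from a distinct prime $q$, at the cost of relabeling $Q_j\mapsto Q_a$ with $a\equiv pj \pmod q$. After finitely many applications, all pure powers $P_p^a,Q_q^c,\ldots,R_r^u$ collect at the right end, where they commute as diagonal matrices and combine into the single number-class $K=P_p^aQ_q^c\cdots R_r^u\in\mathfrak{C}$. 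The remaining left segment then contains only branching letters, and a further application of the meta-commutation relation $P_iQ_j=Q_lP_k$ groups these by prime, giving the product $P_{i_1}\cdots P_{i_b}Q_{j_1}\cdots Q_{j_d}\cdots R_{z_1}\cdots R_{z_v}$.

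For uniqueness, the essential observation is that the datum $(K;i_1,\ldots,i_b;j_1,\ldots,j_d;\ldots;z_1,\ldots,z_v)$ is recovered intrinsically from $X$: the class $K$ is determined as the unique number-class in $\mathfrak{C}$ of minimal hyper-distance from $X$; the sequence $i_1,\ldots,i_b$ is the unique geodesic in $T_p$ from $K$ to the $p$-projection of $X$; and the same argument applies for the remaining primes. The final identities $K=hM$ and $N=h^2M$ for a number-like class $M\frac{g}{h}$ are exactly the facts about the minimal-hyper-distance number-class recalled in Section~\ref{bigpicture}. The main obstacle is confluence: one must check that the two rewriting moves, applied in any order, always terminate at the same normal form. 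This reduces to the bijectivity of the meta-commutation map $(i,j)\mapsto(k,l)$ of Lemma~\ref{meta2} together with the uniqueness of geodesics in the trees $T_p$, which together guarantee that no ambiguity is introduced during the normalization process.
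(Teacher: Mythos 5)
Your existence argument --- take a word representing a path from $1$ to $X$, cancel adjacent pairs via $P_pP_i=\mathrm{id}$, and sort it with the meta-commutation relations of Lemma~\ref{meta2} until the pure powers collect on the right and the branching letters are grouped by prime --- is exactly the paper's proof, which starts from a path of minimal length and invokes Lemma~\ref{meta2}. The genuine gap is in your uniqueness step. You claim the branching letters $i_1,\ldots,i_b$ are recovered intrinsically as the geodesic word in $T_p$ from $K$ to the $p$-projection of $X$. That is false, and the failure is precisely the non-commutativity the lemma is designed to control: the projection of $\mathfrak{P}$ onto $T_p$ (localization at $p$) is well defined on vertices and collapses $q$-edges, but the \emph{operator label} of an edge is not invariant under it --- which operator $P_i$ realizes a given $p$-edge depends on the prime-to-$p$ part of the basepoint. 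Concretely, take $X=\frac{1}{6},\frac{1}{6}$, whose normal form is $2_0 3_1$ (recall $2_0 3_1 = 3_0 2_1$). Its $2$-projection is $\frac{1}{2},\frac{1}{2}=2_1\cdot 1$ and its $3$-projection is $\frac{1}{3},\frac{1}{3}=3_1\cdot 1$, so your recipe reads off the letters $(2_1;3_1)$; but $2_1 3_1\cdot 1 = \frac{1}{6},\frac{2}{3}\neq X$, and the true normal-form letter is $2_0$, not $2_1$. Thus the tree projections do pin down the exponents $(a,b),(c,d),\ldots$ (these are vertex data: $a+b=k$ together with the position of $K$), but \emph{not} the digits $i_1,\ldots,i_b$; and since your confluence argument --- which you rightly identify as the crux --- is made to rest on this intrinsic recovery, it collapses with it.

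The gap is repairable, but by a different mechanism: prove directly that evaluation is injective on sorted words. A short computation shows that the sorted word sends $1$ to the class with $M$-part $p^{a-b}q^{c-d}\cdots r^{u-v}$ and fractional part
\[
F \;=\; \frac{n_p}{p^b}+\frac{n_q}{p^bq^d}+\cdots+\frac{n_r}{p^bq^d\cdots r^v} \pmod 1, \qquad n_p=i_1p^{b-1}+\cdots+i_b,\ \text{etc.}
\]
Since the common numerator $n_pq^d\cdots r^v+\cdots+n_r$ runs bijectively over $\{0,\ldots,p^bq^d\cdots r^v-1\}$ as the digit strings vary (mixed-radix representation), distinct sorted words satisfying the exponent constraints $a+b=k,\ldots,u+v=s$ yield distinct classes; the constraints themselves recover $a,b$ from $v_p(M)$ and $k$, ruling out unreduced artifacts such as $P_0P_p=\mathrm{id}$. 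Injectivity then gives uniqueness, and confluence of your two rewriting moves comes for free: any two normalizations of words representing the same class are sorted words representing the same class, hence equal. (Incidentally, your $a\equiv pj \pmod q$ silently corrects a typo in the paper's statement of Lemma~\ref{meta2}, and is the right form.)
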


\begin{proof} A path from $1$ to $M,\frac{g}{h}$ of minimal length can be viewed as a product (left multiplication) $X_l X_{l-1} \hdots X_2 X_1$ with each $X_i$ one of the matrices $P_a,Q_b,\hdots,R_u$. The claim follows from using the meta-commutation relations of the previous lemma.
\end{proof}

For applications in moonshine, we only need the meta-commutation relations for the operators $2_0,2_1$ with respect to $3_0,3_1,3_2$ (note that these two sets each generate a free monoid). These are:
\[
2_0.3_0=3_0.2_0,~2_0.3_1=3_0.2_1,~2_0.3_2=3_1.2_0, \]
\[
~2_1.3_0=3_1.2_1,~2_1.3_1=3_2.2_0,~2_1.3_2=3_2.2_1 \]
These relations make it easy to add the forgotten edges in the pictures below. Further, for convenience, we start a path of hyper-length divisible by $3$ with the edges labeled $3_i$.

Here are the local structures appearing in the (extended) monstrous moonshine picture. For each type we have colored the central number class with a different color. Note further that if the set of roots is $\{ h_1,\hdots,h_k \}$, then the central number class must be a multiple of the least common multiple of the $h_i$.

\begin{itemize}
\item{Type $\{ 1,2 \}$: 
\[
\xymatrix@=.3cm{M  \ar@{-}[r] & \color{cyan}{2M} \ar@{-}[r] & M \frac{1}{2}} \]}
\item{Type $\{ 1,3 \}$:
\[
\xymatrix@=.3cm{& M \frac{1}{3} \ar@[red]@{-}[d] & \\
M \ar@[red]@{-}[r] & \color{orange}{3M} \ar@[red]@{-}[r] \ar@[red]@{-}[d] & M \frac{2}{3} \\
& 9M & } \]}
\item{Type $\{ 1,2,3 \}$:
\[
\xymatrix@=.3cm{3M \frac{1}{2} \ar@{-}[rd] & 2M \frac{1}{3} \ar@[red]@{-}[d] & 12M \ar@{-}[ld] \\
2M \ar@[red]@{-}[r] & \color{purple}{6M} \ar@[red]@{-}[r] \ar@[red]@{-}[d] \ar@{-}[ld] & 2M \frac{2}{3} \\
3M & 18M &} \]}
\item{Type $\{ 1,2,4 \}$:
\[
\xymatrix@=.3cm{ &  M \frac{1}{2} \ar@{-}[d] & & M \frac{1}{4} \ar@{-}[d] & \\
M \ar@{-}[r] & 2M \ar@{-}[r] & \color{magenta}{4M} \ar@{-}[r] \ar@{-}[d] & 2M \frac{1}{2} \ar@{-}[r] & M \frac{3}{4} \\
& & 8M \ar@{-}[ld] \ar@{-}[rd] & & \\
& 4M \frac{1}{2} & &  16 M &} \]}
\item{Type $\{ 1,2,3,6 \}$:
\[
\xymatrix@=.3cm{& M \frac{1}{6} \ar@{-}[rd] & M \frac{2}{3} \ar@{-}[d] & 4M \frac{2}{3} \ar@{-}[ld] & \\
4M \ar@{-}[rd] & 3M \frac{1}{2} \ar@{-}[rd] & 2M \frac{1}{3} \ar@[red]@{-}[d] & 12 M \ar@{-}[ld] & M \frac{1}{3} \ar@{-}[ld] \\
M \frac{1}{2} \ar@{-}[r] & 2M \ar@[red]@{-}[r] \ar@{-}[ld] & \color{blue}{6M} \ar@{-}[ld] \ar@[red]@{-}[d] \ar@[red]@{-}[r] & 2M \frac{2}{3} \ar@{-}[r] \ar@{-}[rd] & M \frac{5}{6} \\
M & 3M & 18 M \ar@{-}[ld] \ar@{-}[d] \ar@{-}[rd] & & 4M \frac{1}{3} \\
& 9M & 9M \frac{1}{2} & 36M & }
\]}
\item{Type $\{ 1,2,4,8 \}$:
\[
\xymatrix@=.3cm{& M \frac{1}{8} \ar@{-}[rd] & M \frac{5}{8} \ar@{-}[d] & & M \frac{3}{8} \ar@{-}[d] & M \frac{7}{8} \ar@{-}[ld] & \\
M \frac{3}{4} \ar@{-}[rd] & & 2M \frac{1}{4} \ar@{-}[rd] & & 2M \frac{3}{4} \ar@{-}[ld] & & 4M \frac{1}{4} \ar@{-}[ld] \\
& 2M \frac{1}{2} \ar@{-}[rd] \ar@{-}[ld] & & 4M \frac{1}{2} \ar@{-}[d] & & 8M \frac{1}{2} \ar@{-}[ld] \ar@{-}[rd] \\
M \frac{1}{4} & & 4M \ar@{-}[r] \ar@{-}[ld] & \color{brown}{8M} \ar@{-}[r] & 16M \ar@{-}[rd] & & 4M \frac{3}{4} \\
& 2M \ar@{-}[ld] \ar@{-}[d] & & & & 32M \ar@{-}[d] \ar@{-}[rd] & \\
M \frac{1}{2} & M & & & & 64M & 16M \frac{1}{2} } 
\]}
\item{Type $\{ 1,2,3,4,6,12 \}$:
\[
\xymatrix@=.3cm{M \frac{1}{4} \ar@{-}[rd] & M \frac{1}{3} \ar@{-}[rd] & & M \frac{5}{6} \ar@{-}[ld] & 4M \frac{1}{3} \ar@{-}[ld] & \\
M \frac{1}{12} \ar@{-}[r] & 2M \frac{1}{6} \ar@{-}[rd] & 2M \frac{2}{3} \ar@{-}[d] & 8M \frac{2}{3} \ar@{-}[ld] \ar@{-}[r] & 4M \frac{5}{6} & M \frac{1}{6} \ar@{-}[ld] \\
3M \frac{3}{4} \ar@{-}[r] & 6M \frac{1}{2} \ar@{-}[ld] \ar@{-}[rd] & 4M \frac{1}{3} \ar@[red]@{-}[d] & \color{cyan}{24M} \ar@{-}[ld] & 2M \frac{1}{3} \ar@{-}[ld] \ar@{-}[r] & M \frac{2}{3} \\
3M \frac{1}{4} & \color{magenta}{4M} \ar@[red]@{-}[r] & \color{red}{12M} \ar@{-}[ld] \ar@[red]@{-}[r] \ar@[red]@{-}[d] & 4M \frac{2}{3} \ar@{-}[r] \ar@{-}[rd] & 2M \frac{5}{6} \ar@{-}[r] \ar@{-}[rd] & M \frac{5}{12} \\
& \color{cyan}{6M} & \color{magenta}{36M} & & 8M \frac{1}{3} \ar@{-}[d] \ar@{-}[rd] & M \frac{1}{12} \\
& & & & 4M \frac{2}{3} & 4M \frac{1}{6}} \]
Here we simplified the picture by indicating the local type of the intersection of the the number-classes $4M,6M,24M$ and $36M$ with the neighborhood of $12M$.}
\end{itemize}

\subsection{The (extended) monstrous moonshine picture} As mentioned before, in order to describe the (extended) monstrous moonshine picture $\mathfrak{M}$ (resp. $\mathfrak{M}^e$) we work through the list of 171 moonshine groups $n|h+e,f,\hdots$ as given in \cite[p. 327-329]{ConwayNorton} (that is, Figure~\ref{moonshinegroups} above) and determine for each of them all classes appearing in the $(n|h)$-serpent (resp. in the $(N|1)$-snake where $N=h.n$). The number classes of $\mathfrak{M}$ and $\mathfrak{M}^e$ are the same and are all divisors of the occurring  $N$. For each of these number classes we next determine the set of divisors of $24$ for which primitive roots of unity appear with center the given number class. These sets will then determine the local structure in that number class of $\mathfrak{M}$ (resp. $\mathfrak{M}^e$).

\begin{theorem} The (extended) monstrous moonshine picture $\mathfrak{M}$ (resp. $\mathfrak{M}^e$) is the subgraph of $\mathfrak{P}$ on exactly 207 (resp. 218) classes.

 Among these there are exactly 97 number classes $C$. The remaining number-like classes $M \frac{g}{h}$ are interpreted as primitive $h$-th roots of unity centered at the number-class $C=M.h$. 

Figure~\ref{RootsCentered} lists for each $C$ the sets of primitive roots centered at $C$. Here, a bracketed entry means these roots only appear in $\mathfrak{M}^e$ and not in $\mathfrak{M}$.
\begin{figure} \label{RootsCentered}
{\Small
\[
\begin{array}{|c|l||c|l||c|l||c|l|}
\hline
C & h & C & h & C & h & C & h \\
\hline
1 & 1 & 25 & 1 & 52 & 1,2,4 & 94 & 1  \\
2 & 1,2 & 26 & 1,2 & 54 & 1 & 95 & 1 \\
3 & 1,3 & 27 & 1 & 55 & 1 & 96 & 1,3 \\
4 & 1,2,4 & 28 & 1,2,4 & 56 & 1,2,4 & 104 & 1,2,4 \\
5 & 1 & 29 & 1 & 57 & 1,3 & 105 & 1 \\
6 & 1,2,3,6 & 30 & 1,2,3,6 & 59 & 1 & 110 & 1 \\
7 & 1 & 31 & 1 & 60 & 1,2,3,6 & 112 & 1,(2) \\
8 & 1,2,4,(8) & 32 & 1,2 & 62 & 1 & 117 & 1 \\
9 & 1 & 33 & 1 & 63 & 1 & 119 & 1 \\
10 & 1,2 & 34 & 1,2 & 64 & 1 & 120 & 1,3 \\
11 & 1 & 35 & 1 & 66 & 1 & 126 & 1,2 \\
12 & 1,2,3,4,6,12 & 36 & 1,2,4 & 68 & 1,2 & 136 & 1 \\
13 & 1 & 38 & 1 & 69 & 1 & 144 & 1,2 \\
14 & 1,2 & 39 & 1,3 & 70 & 1 & 160 & 1 \\
15 & 1,3 & 40 & 1,2 & 71 & 1 & 168 & 1 \\
16 & 1,2,(4) & 41 & 1 & 72 & 1,2,4 & 171 & 1 \\
17 & 1 & 42 & 1,2,3,(6) & 78 & 1 & 176 & 1 \\
18 & 1,2 & 44 & 1,2,(4) & 80 & 1,2 & 180 & 1,2 \\
19 & 1 & 45 & 1 & 84 & 1,2,3 & 208 & 1,2 \\
20 & 1,2,4 & 46 & 1 & 87 & 1 & 224 & 1 \\
21 & 1,3 & 47 & 1 & 88 & 1,2 & 252 & 1 \\
22 & 1,2 & 48 & 1,2,3,6 & 90 & 1,2 & 279 & 1 \\
23 & 1 & 50 & 1 & 92 & 1 & 288 & 1 \\
24 & 1,2,3,4,6,12 & 51 & 1 & 93 & 1,3 & 360 & 1 \\
& & & & & & 416 & 1 \\
\hline
\end{array}
\]
}
\caption{Roots centered at $C$}
\end{figure}
\end{theorem}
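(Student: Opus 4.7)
The proof is a finite, essentially computational verification, since both $\mathfrak{M}$ and $\mathfrak{M}^e$ are explicitly defined as unions, over the $171$ entries of Figure~\ref{moonshinegroups}, of combinatorial objects whose vertex sets are already parameterized: the $(n|h)$-serpent is the set of classes $M\frac{g}{k}$ with $k\mid h$, $1\mid M\mid N/k^2$, $0\le g<k$, $\gcd(g,k)=1$ (where $N=h.n$), and the $(N|1)$-snake is the larger set where $k$ runs over all divisors of $24$ with $k^2\mid N$. The plan is therefore to enumerate, aggregate, and count in three stages, and to cross-check the result against the table.

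First I would run through the $171$ groups, extract the pair $(n,h)$ from each Conway--Norton symbol $n|h+e,f,\dots$ (discarding the Atkin--Lehner part, which does not affect the vertex set), form $N=h.n$, and record the vertex set of the associated $(n|h)$-serpent (for $\mathfrak{M}$) and of the associated $(N|1)$-snake (for $\mathfrak{M}^e$). Many entries collapse at this stage: Fricke and non-Fricke versions of the same $n|h$ produce identical serpents, and whenever the serpent of one group contains that of another it suffices to keep the larger. In particular, the $24|12$-anaconda of subsection~\ref{24J}, the $(8|4)$-serpent of example~\ref{8C}, and a handful of other maximal serpents (e.g.\ those arising from $N=288, 360, 416$) already absorb the majority of vertices.

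Second, I would organize the resulting finite set of number-like classes by their canonical center: each class $M\frac{g}{k}$ has center $C=k.M$, which is the unique number class at minimal hyper-distance from it (this was observed right after the definition of $\mathfrak{P}$). Bucketing classes by $C$ produces, for each $C$ that occurs, the set $S_C$ of denominators $k$ such that some $M\frac{g}{k}$ with $k.M=C$ lies in $\mathfrak{M}$ (respectively $\mathfrak{M}^e$). The claim is then reduced to verifying that (i) the set of occurring centers $C$ has cardinality $97$ in both pictures and matches the first column of Figure~\ref{RootsCentered}; (ii) the sets $S_C$ agree with the second column, with bracketed entries appearing for $\mathfrak{M}^e$ only; and (iii) the total count $\sum_C \bigl(1+\sum_{k\in S_C\setminus\{1\}}\varphi(k)\bigr)$ equals $207$ for $\mathfrak{M}$ and $218$ for $\mathfrak{M}^e$.

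The step I expect to be the real obstacle is not the bulk enumeration but the verification of the bracketed entries in Figure~\ref{RootsCentered}, that is, the roots that lie in some $(N|1)$-snake but in \emph{no} $(n|h)$-serpent from the list. For each such entry, for example $(8)$ at $C=8$, $(4)$ at $C=16$ and $C=44$, $(6)$ at $C=42$, $(2)$ at $C=112$, one must exhibit a moonshine group $n'|h'$ with $h'.n'=N$, $k^2\mid N$, and $k.M=C$ (placing the class in the snake) and simultaneously rule out every moonshine group in the list having both $k\mid h$ and the required divisibility $1\mid M\mid n/h$ (which would place it in the serpent). This is a finite but delicate check that has to be done one bracketed entry at a time; once it is cleared, the remaining assertions reduce to mechanical enumeration and summation, and the local-structure descriptions of the previous subsection guarantee that the union is closed under the meta-commutation relations of Lemma~\ref{meta2}, so no further edges or vertices are generated.
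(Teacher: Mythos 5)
Your proposal matches the paper's own (implicit) proof: the theorem is established there by exactly the finite enumeration you describe --- running through the $171$ groups of Figure~\ref{moonshinegroups}, listing the classes of each $(n|h)$-serpent (resp.\ $(N|1)$-snake with $N=h.n$), bucketing the number-like classes by their center $C=k.M$, and reading off the sets of denominators --- and your counting formula $\sum_C \bigl(1+\sum_{k\in S_C\setminus\{1\}}\varphi(k)\bigr)$ does reproduce $97+110=207$ and $97+121=218$ from the table. Your isolation of the bracketed entries as the delicate part is also consistent with the paper, which singles out precisely the $8$-th roots at $C=8$ (from the $(64|1)$-snake of $32|2+$) and the $4$-th roots at $C=44$ (from the $(176|1)$-snake of $88|2+$) as classes visible only in $\mathfrak{M}^e$.
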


\vskip 3mm

\subsection{The harmonies of the moonshine picture}

We will now see that group-theoretic information of the monster group $\mathbf{M}$ determines the shape of $\mathfrak{M}$ and $\mathfrak{M}^e$, as well as the threads of all moonshine groups. The underlying reason is the theory of harmonics, see \cite[\S 6 and Table 3]{ConwayNorton}.

The 'anaconda' conjugacy class $24J$ appears to play a special role in moonshine, as does the conjugacy class $8C$. Powers of these elements belong to the conjugacy classes
\[
\xymatrix@=.4cm{& 24J \ar@{-}[ld] \ar@{-}[rd]& \\
12J \ar@{-}[d] \ar@{-}[rrd] & & 8F \ar@{-}[d] \\
6F \ar@{-}[rrd] \ar@{-}[d] & & 4D \ar@{-}[d] \\
3C \ar@{-}[rd] & & 2B \ar@{-}[ld] \\
& 1A &} \qquad \xymatrix@=.4cm{ \\ 8C \ar@{-}[d] \\ 4B \ar@{-}[d] \\ 2A \ar@{-}[d] \\ 1A} 
\]

\begin{theorem} \label{poging1} Let $X$ be a conjugacy class of the monster $\mathbf{M}$ of order $n$ and let $Y$ be a conjugacy class of powers of $24J$ or $8C$ of maximal order $k$ such that $X$ belongs to the power-up classes of $Y$, see \cite{Atlas}. Then, for all but $12$ counter-examples the thread of the moonshine group corresponding to $X$ is 
\[
(n|\frac{k}{2})~\text{if $k$ is even,}~\quad (n|3)~ \text{if $Y=3C$, and $(n|1)$ if $Y=1A$}. \]
The counter-examples are exactly the power-up classes, see \cite{Atlas}, of
\[
\begin{array}{|c|l|}
\hline
8B & 24A, 24E,40B,88A,88B \\
8D & 24D,24H,40C \\
16A & 48A \\
16C & 32B \\
\hline
\end{array}
\]
Therefore, the thread of the moonshine group corresponding to conjugacy class $X$ of order $n$ is $(n|t)$ where $t$ is the value obtained from the above procedure, unless $X$ is in the power-up classes of $8B,8D,16A$ or $16C$ in which case we have the thread $(n|2t)$ except when $X=16C$.
\end{theorem}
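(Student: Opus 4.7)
The plan is to verify the theorem by a finite case analysis cross-referencing the ATLAS power-maps \cite{Atlas} against the Conway--Norton moonshine symbols tabulated in Figure~\ref{moonshinegroups}. The statement has two pieces: an agreement between the predicted thread $(n|t)$ and the one read off from the Conway--Norton symbol in the generic case, and an exhaustive identification of the exceptions. Fixing the anchor set $\mathcal{A}=\{24J,12J,8F,8C,6F,4D,4B,3C,2A,2B,1A\}$, I would use the ATLAS power-map data to assign to each of the $171$ conjugacy classes $X$ of order $n$ the unique maximal-order element $Y\in\mathcal{A}$ that arises as a power of $X$. This partitions the conjugacy classes into eleven groups refining the two Hasse diagrams displayed just before the statement, and a useful sanity check is that the cardinalities sum to $171$.

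Next, for every $X$ with anchor $Y$ of order $k$ I would record the predicted thread $(n|k/2)$, $(n|3)$, or $(n|1)$ per the recipe and compare it with the actual Conway--Norton symbol in Figure~\ref{moonshinegroups}. The conceptual reason this is expected to hold for the bulk of classes is the harmonics theory of \cite[\S 6 and Table~3]{ConwayNorton}: the McKay--Thompson series of $X$ is a ``replicate'' or harmonic of that of $Y$, and passage to the harmonic multiplies the level index by $n/k$ while preserving the $h$-parameter in the thread symbol, giving exactly $(n|k/2)$. In geometric terms, inside the big picture $\mathfrak{P}$ of Section~\ref{bigpicture} this is the assertion that the $(n|k/2)$-serpent is the ``$n/k$-fold dilation'' of the $(k|k/2)$-serpent, compatibly with the action of the generators $x$ and $y$ described in Section~\ref{bigpicture}.

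The main obstacle is isolating and justifying the counter-examples, all of which lie above the four classes $8B$, $8D$, $16A$, $16C$ that fall outside $\mathcal{A}$. Here the plan is to enlarge the anchor set by these four ``shadow'' classes, rerun the assignment procedure, and verify by direct inspection of Figure~\ref{moonshinegroups} that every class strictly above one of the four shadows has its thread doubled to $(n|2t)$. Concretely, this amounts to checking the entries $24A$, $24E$, $40B$, $88AB$ above $8B$; $24D$, $24H$, $40CD$ above $8D$; $48A$ above $16A$; and $32B$ above $16C$, together with the shadow classes themselves. The single anomaly is $X=16C$, where neither the original rule $(16|2)$ nor the doubled rule $(16|4)$ agrees with the tabulated $16+$; this forces the ``except when $X=16C$'' caveat and is handled by reading its Conway--Norton symbol directly from the table. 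The combination of these verifications yields the stated count of exceptions and closes the proof.
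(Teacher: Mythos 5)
Your proposal takes essentially the same route as the paper: the paper's proof is precisely the finite verification you describe, namely comparing the output of the procedure (recorded in Figure~\ref{threads}) against the Conway--Norton symbols of Figure~\ref{moonshinegroups}, with the harmonics of \cite[\S 6]{ConwayNorton} invoked only as conceptual motivation, exactly as you use them. One detail in your handling of the caveat is factually wrong, however: for $X=16C$ the unmodified rule \emph{does} agree with the table --- the maximal anchor among the powers of $16C$ has order $1$ or $2$, so the prediction is $(16|1)$, which matches the tabulated group $16+$; accordingly $16C$ carries no $!$-mark in Figure~\ref{threads} and is not one of the $12$ counter-examples. The ``except when $X=16C$'' clause in the theorem is needed not because both rules fail at $16C$, as you assert, but because $16C$ trivially belongs to its own power-up classes (the genuine exception lying above it being $32B$, whose thread $(32|2)$ is the doubled value $(n|2t)$), while the thread of $16C$ itself must \emph{not} be doubled. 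Since your proof is an exhaustive table check, this slip would be caught and corrected in carrying it out, so it does not compromise the method; but as stated it misdescribes the one anomaly the theorem singles out.
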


\begin{proof} This follows from comparing the list of all 171 moonshine groups, given in \cite[p. 327-329]{ConwayNorton} (or Figure~\ref{moonshinegroups}) with the values obtained by this procedure, given in figure~\ref{threads}. Here, we write $!t$ for the correct value when it differs from the number obtained from the procedure.
\end{proof}

{\Small
\begin{figure} \label{threads}
\[
\begin{array}{|c|l||c|l||c|l||c|l||c|l||c|l|}
\hline
1A & 1 & 10B & 1 & 18B & 1 & 26A & 1 & 39A & 1 & 60C & 1 \\
2A & 1 & 10C & 1 & 18C & 1 & 26B & 1 & 39B & 3 & 60D & 1 \\
2B & 1 & 10D & 1 & 18D & 1 & 27A & 1 & 39DC & 1 & 60E & 1 \\
3A & 1 & 10E & 1 & 18E & 1 & 27B & 1 & 40A & 4 & 60F & 6 \\
3B & 1 & 11A & 1 & 19A & 1 & 28A & 2 & 40B & 1 (!2) & 62AB & 1 \\
3C & 3 & 12A & 1 & 20A & 1 & 28B & 1 & 40CD & 1 (!2) & 66A & 1 \\
4A & 1 & 12B & 1 & 20B & 2 & 28C & 1 & 41A & 1 & 66B & 1 \\
4B & 2 & 12C & 2 & 20C & 1 & 28D & 2 & 42A & 1 & 68A & 2 \\
4C & 1 & 12D & 3 & 20D & 2 & 29A & 1 & 42B & 1 & 69AB & 1 \\
4D & 2 & 12E & 1 & 20E & 2 & 30A & 1 & 42C & 3 & 70A & 1 \\
5A & 1 & 12F & 2 & 20F & 1 & 30B & 1 & 42D & 1 & 70B & 1 \\
5B & 1 & 12G & 2 & 21A & 1 & 30C & 1 & 44AB & 1 & 71AB & 1 \\
6A & 1 & 12H & 1 & 21B & 1 & 30D & 1 & 45A & 1 & 78A & 1 \\
6B & 1 & 12I & 1 & 21C & 3 & 30E & 3 & 46AB & 1 & 78BC & 1 \\
6C & 1 & 12J & 6 & 21D & 1 & 30F & 1 & 46CD & 1 & 84A & 2 \\
6D & 1 & 13A & 1 & 22A & 1 & 30G & 1 & 47AB & 1 & 84B & 2 \\
6E & 1 & 13B & 1 & 22B & 1 & 31AB & 1 & 48A & 1 (!2) & 84C & 3 \\
6F & 3 & 14A & 1 & 23AB & 1 & 32A & 1 & 50A & 1 & 87AB & 1 \\
7A & 1 & 14B & 1 & 24A & 1 (!2) & 32B & 1 (!2) & 51A & 1 & 88AB & 1 (!2) \\
7B & 1 & 14C & 1 & 24B & 1 & 33A & 1 & 52A & 2 & 92AB & 1 \\
8A & 1 & 15A & 1 & 24C & 1 & 33B & 1 & 52B & 2 & 93AB & 3 \\
8B & 1 (!2) & 15B & 1 & 24D & 1 (!2) & 34A & 1 & 54A & 1 & 94AB & 1 \\
8C & 4 & 15C & 1 & 24E & 3 (!6) & 35A & 1 & 55A & 1 & 95AB & 1 \\
8D & 1 (!2) & 15D & 3 & 24F & 4 & 35B & 1 & 56A & 1 & 104AB & 4 \\
8E & 1 & 16A & 1 (!2) & 24G & 4 & 36A & 1 & 56B & 4 & 105A & 1 \\
8F & 4 & 16B & 1 & 24H & 1 (!2) & 36B & 1 & 57A & 3 & 110A & 1 \\
9A & 1 & 16C & 1 & 24I & 1 & 36C & 2 & 59AB & 1 & 119AB & 1 \\
9B & 1 & 17A & 1 & 24J & 12 & 36D & 1 & 60A & 2 & & \\
10A & 1 & 18A & 1 & 25A & 1 & 38A & 1 & 60B & 1 & & \\
\hline
\end{array}
\]
\caption{Threads of Moonshine groups}
\end{figure}
}

We do not need the exact threads of all conjugacy classes in order to determine the (extended) monstrous moonshine picture. In fact, the values obtained from the procedure in the previous theorem suffice to get all classes in $\mathfrak{M}$ by taking the union of all $(n|k)$-serpents for the obtained $(n|k)$ from the procedure. 

For the extended moonshine picture $\mathfrak{M}^e$ we proceed similarly by taking the union of all $(N|1)$-snakes from the computed $(n|k)$ with $N=h.k$ from the procedure. This gives all classes except for the primitive $8$-th roots centered at $8$, which come from the $(64|1)$-snake determined by moonshine group $32|2+$ associated to conjugacy class $32B$ and the primitive $4$-th roots of unity, centered at $44$. These classes are contained in the $(176|1)$-snake, determined by the moonshine group $88|2+$ associated to conjugacy classes $88AB$.

\begin{theorem} The monstrous moonshine picture $\mathfrak{M}$ is the subgraph of the big picture $\mathfrak{P}$ on exactly the classes $M \frac{g}{h}$ contained in the $(n|k)$-serpents where $n$ is the order of an element in $\mathbf{M}$ and $k$ is a divisor of $24$ obtained as follows: among the conjugacy classes of order $n$ let $X$ be the class with a power-class $Y$ of maximal order $l$ among the powers of $24J$ or $8C$. Then, $k=3$ if $Y=3C$, $k=1$ if $Y=1A$ and $k=\tfrac{l}{2}$ if $l$ is even.
\end{theorem}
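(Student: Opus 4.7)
The plan is to prove the equality $\mathfrak{M}=\mathfrak{N}$, where $\mathfrak{N}:=\bigcup_{X} T_X$ denotes the union over conjugacy classes $X$ of $\mathbf{M}$ of the $(n|k)$-serpents $T_X$ produced by the procedure. I would establish the two inclusions separately, using Theorem~\ref{poging1} as a black box and reducing the remaining work to a finite check over the twelve exceptional moonshine groups.

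For the easy inclusion $\mathfrak{N}\subseteq\mathfrak{M}$ I would invoke Theorem~\ref{poging1}: for each $X$ of order $n$ with procedure value $k$, the true thread of the associated moonshine group is $(n|k)$, unless $X$ is one of the twelve counter-examples listed there, in which case the true thread is $(n|2k)$. In either case $k\mid h$ for the actual thread value $h$, and the condition ``$d\mid h$ and $M\mid nh/d^2$'' defining the serpent is visibly monotone in $h$, so $T_X=(n|k)$-serpent $\subseteq (n|h)$-serpent $\subseteq\mathfrak{M}$ directly from the definition of $\mathfrak{M}$.

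For the reverse inclusion $\mathfrak{M}\subseteq\mathfrak{N}$, every vertex of $\mathfrak{M}$ lies in the $(n|h)$-serpent of some moonshine group. If $X$ is non-exceptional the serpent equals $T_X$, so the problem reduces to showing that for each of the twelve exceptional moonshine groups the finitely many vertices living in the true $(n|2k)$-serpent but missing from $T_X=(n|k)$-serpent are covered by some other $T_{X'}$ from the procedure. Explicitly, these missing vertices split into two families: the primitive $(2k)$-th roots $M\frac{g}{2k}$ centered at $C=2kM$ with $M\mid n/(2k)$, and more generally the classes $M\frac{g}{d}$ with $d\mid 2k$ and either $d\nmid k$, or $d\mid k$ while $M\mid 2nk/d^2$ but $M\nmid nk/d^2$.

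The only real obstacle is this finite case-by-case bookkeeping over the twelve exceptional moonshine groups $8|2+,\,8|2-,\,16|2+,\,24|2+,\,24|2+3,\,24|2+12,\,24|6+,\,32|2+,\,40|2+,\,40|2+20,\,48|2+,\,88|2+$. For each exception the strategy is to locate a witness class $X'$, typically of larger order $n'$ divisible by $n$ and with procedure value $k'$ satisfying $2k\mid k'$, so that $T_{X'}$ already contains the entire $(n|2k)$-serpent. The most important absorbing serpent is the anaconda $T_{24J}=(24|12)$-serpent $=(288|1)$-snake, which swallows the missing classes attached to every exceptional power-up class of $24J$; the remaining ones are dispatched similarly by reading off a suitable larger class from Figure~\ref{moonshinegroups} and its procedure value from Figure~\ref{threads}, the verification then reducing in each case to routine divisibility checks on the parameters using the local-structure tables established in the preceding subsection.
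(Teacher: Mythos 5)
Your overall architecture (two inclusions, Theorem~\ref{poging1} as black box, monotonicity of serpents in $h$, reduction to a finite check over the twelve exceptional groups) is exactly the shape of the paper's implicit argument --- the paper in fact offers no proof of this theorem beyond the table comparison underlying Theorem~\ref{poging1} and the assertion that the procedure values ``suffice''. Your easy inclusion is fine, and your identification of the twelve exceptional groups is correct. But the step you wave through as ``routine divisibility checks'' is precisely where the content lies, and as you have set it up it cannot be completed: for the two exceptional groups $32|2+$ (class $32B$) and $88|2+$ (classes $88AB$) there is \emph{no} witness class $X'$ with $n \mid n'$ and $2k \mid k'$, nor any union of procedure serpents that absorbs the missing classes. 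Concretely, both order-$32$ classes and both order-$88$ classes power down only to $2A$, $2B$ or $1A$ among the distinguished classes (see the entries $32B$ and $88AB$ in Figure~\ref{threads}, marked $1(!2)$), so the procedure yields the $(32|1)$- and $(88|1)$-serpents. Yet $\mathfrak{M}$ contains the full $(32|2)$- and $(88|2)$-serpents, and in particular the number classes $64$ and $176$ and the classes $16\tfrac{1}{2}$, $11\tfrac{1}{2}$, $22\tfrac{1}{2}$, $44\tfrac{1}{2}$ (centered at $32$, $22$, $44$, $88$; all unbracketed in Figure~\ref{RootsCentered}). A serpent $(n'|k')$ contains the number class $64$ only if $64 \mid n'k'$, and $2^6$ divides no product $n'k'$ arising from the procedure (the candidates are $32{\cdot}1$, $48{\cdot}1$, $24{\cdot}12=2^5 3^2$, $40{\cdot}4=2^5 5$, $56{\cdot}4=2^5 7$, $104{\cdot}4=2^5 13$); similarly $176=2^4{\cdot}11$ divides no $n'k'$, since every order divisible by $11$ gets procedure value $1$. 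So your anaconda-absorption strategy, which does dispatch the $8B$, $8D$, $16A$, $24A$, $24D$, $24E$, $24H$, $40B$, $40CD$ and $48A$ cases (e.g.\ $(24|2)\subseteq(24|12)$ and $(40|2)\subseteq(40|4)$ via $40A$), genuinely breaks at $32B$ and $88AB$.

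It is telling that these are exactly the two groups the paper itself singles out when discussing the extended picture $\mathfrak{M}^e$ (the $(64|1)$- and $(176|1)$-snakes). Carrying out your finite check honestly thus does not complete the proof as stated; it instead shows that the equality $\mathfrak{M}=\mathfrak{N}$ requires correcting the procedure value to $k=2$ for the orders $32$ and $88$ (the $(!2)$ corrections of Theorem~\ref{poging1}), or else adding a caveat for these classes analogous to the one the paper makes for $\mathfrak{M}^e$. You should either incorporate those corrections into the statement you are proving, or record these classes as exceptions --- but the absorption argument alone will not close the gap.
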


Conjugacy class $24J$ is responsible for the $(24|4)$-serpent of subsection~\ref{24J} and $8C$ for the $(8|4)$-serpent of example~\ref{8C}.

The relevance of these two conjugacy classes and the effectiveness of the procedure of theorem~\ref{poging1} is explained by the observations on harmonics \cite[\S 6]{ConwayNorton} which give the relationship between moonshine groups associated to a conjugacy class $g$ and those associated to the power map classes $g^d$.

If the conjugacy class $g$ corresponds to moonshine group $n|h+e,f,\hdots$ then the conjugacy class of $g^d$ has associated moonshine group
\[
n'|h'+e',f',\hdots \qquad \text{with} \quad n' = \frac{n}{(n,d)},~h'=\frac{h}{(h,d)} \]
and $e',f',\hdots$ are the divisors of $\tfrac{n'}{h'}$ among the numbers $e,f,\hdots$.

In the special case when the power $d$ divides $h$, one call $g$ the {\em $d$-th harmonic} of $g^d$, and elements $g$ with corresponding $h=1$ are called {\em fundamental} elements.

The conjugacy classes appearing as powers of the classes $24J$ and $8C$ are exactly the conjugacy classes which appear as harmonics of the first three conjugacy classes $1A,2A$ and $2B$, see \cite[Table 3]{ConwayNorton}. We have for these classes the harmonics
\[
\begin{array}{|c|l|}
\hline
1A & 1A,3C \\
2A & 2A,4B,8C \\
2B & 2B,4D,6F,8F,12J,24J \\
\hline
\end{array}
\]
and the procedure follows using the fact that the threads corresponding to $24J, 8C$ and $3C$ are resp. $(24|12)$, $(8|4)$ and $(3|3)$.

\section{The Mathieu Moonshine Picture}

One expects a similar story to hold for umbral moonshine, see \cite{ChengDuncan}. In this section we work out the details for the largest sporadic Mathieu group $M_{24}$, see \cite{Atlas2}.

The richest moonshine for $M_{24}$ is Mathieu moonshine, which associates a mock modular form of weight $\frac{1}{2}$ to each element $g$ of $M_{24}$. Given $g \in M_{24}$ we can ask for the invariance group of the corresponding mock modular form. There are characters involved, but allowing for these the largest group that preserves the form corresponding to $g$ is $\Gamma_0(n)$ where $n$ is the order of $g$.

In general, these groups are not of genus zero. Nonetheless, Mathieu moonshine does entail a natural association of genus zero groups to the conjugacy classes of $M_{24}$. This is explained in full detail in \cite{ChengDuncan2} for all cases of umbral moonshine, but only for the identity trace/graded dimension function in each case. But, it turns out that all the trace functions reappear amongst the various graded dimension functions (these are the 'multiplicative relations') so this paper can in principle be used to figure out which genus zero groups are attached to which conjugacy classes. I thank John Duncan for explaining this to me.

In the table below we give for each conjugacy class of $M_{24}$ its cycle length, the associated genus zero subgroup, and when possible the Niemeier root system of which the corresponding genus zero group is conjugated to that of the conjugacy class, see \cite[Table 1]{ChengDuncan2} and \cite[Table 3]{ChengDuncan}.

\[
\begin{array}{|c|l|l|l|| c | l | l | l |}
\hline
1A & 1^{24} & 2- & A_1^{24} & 7AB & 1^37^3 & 14+7 & D_8^3  \\
2A & 1^82^8 & 4- & A_3^8 & 8A & 1^2.2.4.8^2 & 16- & A_{15}D_9  \\
2B & 2^{12} & 4|2- & A_1^{24} & 10A & 2^210^2 & 20|2+5 & D_6^4 \\
3A & 1^63^6 & 6+3 & D_4^6 & 11A & 1^211^2 & 22+11 & D_{12}^2 \\
3B & 3^8 & 6|3 & A_1^{24} & 12A & 2.4.6.12 & 24|2 + 3 & . \\
4A & 2^44^4 & 8|2- & A_3^8 & 12B & 12^2 & 24|12- & A_1^{24} \\
4B & 1^42^24^4 & 8- & A_2^7D_5^2 & 14AB & 1.2.7.14 & 28+7 & .  \\
4C & 4^6 & 8|4- & A_1^{24} & 15AB & 1.3.5.15 & 30+3,5,15 & . \\
5A & 1^45^4 & 10+5 & D_6^4 & 21AB & 3.21 & 42|3+7 & D_8^3 \\
6A & 1^22^23^26^2 & 12+3 & .  & 23AB & 1.23 & 46+23 & D_{24} \\
6B & 6^4 & 12|6- & A_1^{24} & & & & \\
\hline
\end{array}
\]

As in the case of the monster group, we define the {\em Mathieu moonshine picture} $\mathfrak{M}_{24}$ to be the subgraph of the big picture $\mathfrak{P}$ on all classes belonging to the $(n|k)$-serpents, when $n|k+e,f,\hdots$ is a genus $0$ group appearing in Mathieu moonshine. The subgraph on all classes belonging to $(N|1)$-snakes for $N=h.n$ will then be the {\em extended Mathieu moonshine picture} $\mathfrak{M}_{24}^e$. Applying the same strategy as above, we obtain

\begin{theorem} The (extended) Mathieu moonshine picture $\mathfrak{M}_{24}$ (resp. $\mathfrak{M}_{24}^e$) is the subgraph of $\mathfrak{P}$ on exactly 93 (resp. 94) classes.

 Among these there are exactly 35 number classes $C$. The remaining number-like classes $M \frac{g}{h}$ are interpreted as primitive $h$-th roots of unity centered at the number-class $C=M.h$. 

The table below lists for each $C$ the sets of primitive roots centered at $C$. Here, a bracketed entry means these roots only appear in $\mathfrak{M}_{24}^e$ and not in $\mathfrak{M}_{24}$.
\[
\begin{array}{|c|l||c|l||c|l||c|l|}
\hline
C & h & C & h & C & h & C & h \\
\hline
1 & 1 & 10 & 1,2 & 22 & 1 & 46 & 1 \\
2 & 1,2 & 11 & 1 & 23 & 1 & 48 & 1,2,3,6 \\
3 & 1,3 & 12 & 1,2,3,4,6,12 & 24 & 1,2,3,4,6,12 & 63 & 1 \\
4 & 1,2,4 & 14 & 1,(2) & 28 & 1 & 72 & 1,2,4 \\
5 & 1 & 15 & 1 & 30 & 1 & 96 & 1,3 \\
6 & 1,2,3,6 & 16 & 1,2 & 32 & 1 & 126 & 1 \\
7 & 1 & 18 & 1,2 & 36 & 1,2,4 & 144 & 1,2 \\
8 & 1,2,4 & 20 & 1,2 & 40 & 1 & 288 & 1 \\
9 & 1 & 21 & 1,3 & 42 & 1,3 & & \\
\hline
\end{array}
\]
\end{theorem}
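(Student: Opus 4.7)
The strategy is to mirror the proof of the corresponding theorem for the monstrous moonshine picture in section~3: I enumerate the list of genus zero moonshine groups attached to conjugacy classes of $M_{24}$, compute for each the associated $(n|k)$-serpent and the $(N|1)$-snake with $N=kn$, take their union inside $\mathfrak{P}$, and then read off the statistics.

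First I would extract, from the table of 21 conjugacy classes of $M_{24}$ in the excerpt, the set of distinct pairs $(n,k)$ appearing in the associated genus zero group $n|k+e,f,\hdots$. The dominant contribution is the $(24|12)$-serpent arising from class $12B$; by construction this is exactly the monstrous anaconda of subsection~\ref{24J} and already supplies $70$ classes, including most of the ``interesting'' number classes such as $6,12,24,48,96,144,288$ and their divisors. The other substantial contributors are the $(8|4)$-serpent of example~\ref{8C} (from $4C$), the $(6|3)$-serpent (from $3B$), the $(16|1)$-snake (from $8A$), the $(28|1)$-snake (from $14AB$), the $(24|2)$- and $(24|6)$-serpents from $12A$, $10A$, $21AB$, and the various $(n|1)$-threads (and their enclosing snakes) coming from the classes whose moonshine group has $h=1$.

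Next I would apply the explicit combinatorial description from subsection~\ref{snakes}: the $(n|k)$-serpent consists of the classes $M\frac{g}{h}$ with $h \mid k$, $(g,h)=1$, and $M \mid \frac{kn}{h^{2}}$, while the $(N|1)$-snake consists of the classes $M\frac{g}{h}$ with $h \mid 24$, $h^{2} \mid N$, $(g,h)=1$, and $M \mid \frac{N}{h^{2}}$. Computing these for each pair $(n,k)$ on the Mathieu list produces an explicit finite vertex set. Organising the union by the center $C = M \cdot h$ and the value of $h$ yields exactly the centers and root-sets listed in the table; summing $\phi(h)$ over the entries then gives the claimed totals $93$ (resp. $94$) with $35$ number classes. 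The unique class separating $\mathfrak{M}_{24}^{e}$ from $\mathfrak{M}_{24}$ turns out to be the primitive square root $7\frac{1}{2}$ centered at $C=14$, which belongs to the $(28|1)$-snake attached to $14AB$ (moonshine group $28+7$) but not to its $(28|1)$-serpent, since the latter reduces to the bare $(28|1)$-thread.

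The proof is not conceptually difficult once the combinatorial description of serpents and snakes is in place; the main obstacle is pure bookkeeping, in particular avoiding double-counting when the same class belongs to several serpents and correctly flagging those classes which appear only inside an $(N|1)$-snake and not in any $(n|k)$-serpent of the list. A systematic enumeration, most cleanly carried out by computer starting from the large $(24|12)$-contribution and adjoining the smaller serpents and snakes one at a time, will verify the stated vertex count and reproduce the root-set table.
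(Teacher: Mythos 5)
Your proposal is correct and follows essentially the same route as the paper, whose own ``proof'' is precisely the computation you describe (``applying the same strategy as above''): enumerate the $(n|k)$-serpents and $(N|1)$-snakes attached to the genus zero groups in the $M_{24}$ table via the combinatorial descriptions $h\mid k$, $M\mid \tfrac{kn}{h^2}$ (serpent) and $h\mid 24$, $h^2\mid N$, $M\mid \tfrac{N}{h^2}$ (snake), take the union, organise by centers $C=Mh$, and count via $\sum\phi(h)$; your identification of $7\tfrac{1}{2}$, centered at $C=14$ and coming only from the $(28|1)$-snake of $14AB$ (group $28+7$), as the unique class in $\mathfrak{M}_{24}^{e}\setminus\mathfrak{M}_{24}$ matches the single bracketed entry in the table. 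One immaterial slip in your list of contributors: $6B$ yields the $(12|6)$-serpent, $10A$ the $(20|2)$-serpent and $21AB$ the $(42|3)$-serpent --- there is no $(24|6)$-serpent on the Mathieu list --- but your systematic enumeration would catch this and it does not affect the argument.
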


Here again, the largest serpent in the picture is the 'anaconda' $(24|12)$-serpent (or the $(288|1)$-snake) corresponding to conjugacy class $12B$. Also here it turns out that the powers of $12B$ allow us to compute the threads of most groups, and that this procedure gives a group-theoretic description of the pictures $\mathfrak{M}_{24}$ and $\mathfrak{M}_{24}^e$.

The powers of $12B$ are the following conjugacy classes, see \cite{Atlas2}.

\[
\xymatrix@=.4cm{& 12B \ar@{-}[dl] \ar@{-}[dr] & \\
6B \ar@{-}[d] \ar@{-}[drr] & & 4C \ar@{-}[d] \\
3B \ar@{-}[dr] & & 2B \ar@{-}[dl] \\
& 1A &} \]

\begin{theorem} \label{poging2} Let $X$ be a conjugacy class of the Mathieu group $M_{24}$ of order $n$ and let $Y$ be a conjugacy class of powers of $12B$ of maximal order $k$ such that $X$ belongs to the power-up classes of $Y$, see \cite{Atlas2}. Then, for all but two counter-examples the thread of the genus zero group corresponding to $X$ is 
\[
(2n|k)  \]
In the table below we give the computed values of $k$ and indicate with $(!t)$ the correct value of the thread.
\[
\begin{array}{|c|l||c|l|}
\hline
1A & 1 & 7AB & 1 \\
2A & 1 & 8A & 1 \\
2B & 2 & 10A & 2 \\
3A & 1 & 11A & 1 \\
3B & 3 & 12A & 1(!2) \\
4A & 1(!2) & 12B & 12 \\
4B & 1 & 14AB & 1 \\
4C & 4 & 15AB & 1 \\
5A & 1 & 21AB & 3 \\
6A & 1 & 23AB & 1 \\
6B & 6 & & \\
\hline
\end{array}
\]
The two counter-examples are exactly the power-up classes of $4A$, see \cite{Atlas}.
\end{theorem}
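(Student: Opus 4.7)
The plan is to verify the statement by case-by-case comparison, along the same lines as the proof of Theorem~\ref{poging1}. Since $M_{24}$ has only $21$ conjugacy classes, listed in the first table of this section, the entire check is finite.

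First I would use the cycle shape of each class, together with the power-map data in \cite{Atlas2}, to determine for each conjugacy class $X$ of order $n$ the largest $k$ such that some power $X^{n/k}$ lies in one of the classes $\{1A, 2B, 3B, 4C, 6B, 12B\}$. Inspection of cycle shapes makes this mechanical: a class powers up to $2B$ only if the appropriate power has shape $2^{12}$, to $4C$ only if $4^6$, to $6B$ only if $6^4$, to $12B$ only if $12^2$, and to $3B$ only if $3^8$. Next I would compute the predicted thread $(2n|k)$ for each $X$ and compare it with the actual thread extracted from the Conway-Norton symbol of the associated genus zero group listed in the first table of the section.

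The two exceptional cases then emerge transparently. For $X=4A$ of shape $2^4 4^4$, squaring yields shape $1^8 2^8$, that is class $2A$ rather than $2B$; hence $4A$ powers up only to $1A$ and the procedure predicts $(8|1)$, whereas the actual thread of $8|2-$ is $(8|2)$. For $X=12A$ of shape $2\cdot 4\cdot 6\cdot 12$, the cube has shape $2^4 4^4$ (class $4A$, not $4C$) and the square has shape $1^2 2^2 3^2 6^2$ (class $6A$, not $6B$); no power lies in any of $\{2B, 3B, 4C, 6B, 12B\}$, so the procedure again gives $k=1$ with predicted thread $(24|1)$, while the actual group $24|2+3$ has thread $(24|2)$. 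These are precisely the two power-up classes of $4A$ referenced in the statement.

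The main obstacle is simply the bookkeeping of twenty-one power maps and the translation between Conway-Norton symbols and their underlying $(n'|h')$-pairs. Conceptually, the result reflects the harmonics principle of \cite[\S 6]{ConwayNorton}: the classes occurring as powers of $12B$ are exactly those arising as harmonics of the fundamental classes $1A$ and $2B$ of $M_{24}$, and the procedure captures precisely those $X$ whose associated genus zero group lies in an $(n|k)$-serpent anchored at such a harmonic. The classes $4A$ and $12A$ are harmonics of $2A$, which is not a power of $12B$; they consequently escape the procedure and require the correction $k\mapsto 2k$, exactly analogous to the exceptions $8B, 8D, 16A, 16C$ in the monstrous case of Theorem~\ref{poging1}.
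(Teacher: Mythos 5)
Your proposal is correct and takes essentially the same approach as the paper: just as for Theorem~\ref{poging1}, the result is established by a finite case-by-case comparison of the procedure's predicted values of $k$ against the threads read off from the table of genus zero groups, and your cycle-shape computations of the power maps correctly single out $4A$ (with $4A^2=2A$, not $2B$) and $12A$ (with $12A^3=4A$ and $12A^2=6A$) as the only two failures, i.e.\ the power-up classes of $4A$, each requiring the correction $k \mapsto 2k$. One small imprecision in your closing heuristic: strictly, $12A$ is the second harmonic of $6A$ (since $12A^2=6A$ with $d=2$ dividing $h=2$) rather than of $2A$, but this remark is not load-bearing and the verification itself is complete.
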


An immediate consequence of this result is that the (extended) Mathieu moonshine picture $\mathfrak{M}_{24}$ (and $\mathfrak{M}_{24}^e$) can be described group-theoretically starting from the powers of conjugacy class $12B$ by the procedure outline above.

\end{document}